\newtheorem{lemma}{Lemma}[section]
\newtheorem{theorem}[lemma]{Theorem}
\newtheorem{prop}[lemma]{Proposition}
\newtheorem{coro}[lemma]{Corollary}
\newtheorem{definition}[lemma]{Definition}
\newtheorem{problem}{Problem}
\newcommand*\xbar[1]{%
  \hbox{%
    \vbox{%
      \hrule height 0.5pt % The actual bar
      \kern0.5ex%         % Distance between bar and symbol
      \hbox{%
        \kern-0.1em%      % Shortening on the left side
        \ensuremath{#1}%
        \kern-0.1em%      % Shortening on the right side
      }%
    }%
  }%
}
\newcommand{\po}{\mathcal{P}}
\newcommand{\G}{\mathcal{G}}
\newcommand{\fl}{\mathcal{F}}
\newcommand{\M}{\mathcal{M}}
\newcommand{\m}{\mathcal{M}}
\newcommand{\n}{[n]}
\begin{document}

\title{Polytopality of Maniplexes}

\author{
Jorge Garza-Vargas and Isabel Hubard\footnote{isahubard@im.unam.mx} \\
Instituto de Matem\'aticas\\ Universidad Nacional Aut\'onoma de M\'exico
}

\maketitle

\begin{abstract}
Given an abstract polytope $\po$, its flag graph is the edge-coloured graph whose vertices are the flags of $\po$ and the $i$-edges correspond to $i$-adjacent flags. Flag graphs of polytopes are maniplexes. On the other hand, given a maniplex $\m$, on can define a poset $\po_\m$ by means of the non empty intersection of its faces. In this paper we give necessary and sufficient conditions (in terms of graphs) on a maniplex $\m$ in order for $\po_\m$ to be an abstract polytope. Moreover, in such case, we show that $\m$ is isomorphic to the flag graph of $\po_\m$. This in turn gives necessary and sufficient conditions for a maniplex to be (isomorphic to) the flag graph of a polytope.
\end{abstract}

{\bf Keywords:} Abstract polytopes, maniplexes, polytopal maps, edge coloured graphs.

\section{Introduction}

The beauty and symmetry of the Platonic Solids have been studied for centuries. 
These solids have been generalized, for example, to higher dimensions as convex polytopes (\cite{shla}), and to polyhedra surfaces different than the sphere as maps (see for example \cite{brah}).
Abstract polytopes generalize convex polytopes (or more precisely, their face lattice) to more general combinatorial objects and are defined as posets with certain conditions. 

Maniplexes were first introduced by Steve Wilson in \cite{mani} to somehow unify the study of maps and abstract polytopes. 
They generalize maps on surfaces to higher dimensions and, at the same time, they generalize (the flag graphs of) abstract polytopes. Of course, the Platonic Solids can be thought as maniplexes.

The task of determining if a maniplex is the flag graph of a polytope or not, can arise when dealing with operations on polytopes, such as the Petrie operation (\cite{sdmono}), the mix or parallel product (\cite{mixing}) or the Twist operation (\cite{twist}). The problem is also often encounter when dealing with covers and quotients of polytopes (\cite{semisparse,norm,regcov}), in particular in determining the polytopality of the so-called minimal regular cover of a polytope (\cite{mixing,tomo}).

Maniplexes can be defined in several ways. In this paper, they are regarded as edge-coloured graphs with certain properties. 
Given a maniplex, one can define, in a natural way, a poset associated to it. Such poset, in general, need not be a polytope.
The aim of the paper is to give necessary and sufficient conditions on an edge-coloured graph to be the flag graph of an abstract polytope. 
In fact, our results are slightly stronger as we give necessary and sufficient conditions on a maniplex in order for the induced poset to be a polytope.
We give two such conditions, that we call the {\em connected intersection property} and the {\em path intersection property}, respectively. As their names indicate, the first condition deals with certain connected components of the graph, while the second one deals with certain paths of it.

The paper is organized as follows. In Section~\ref{basic} we give the basic notions of abstract polytopes, maps, edge-coloured graphs and maniplexes, and set the notation used throughout the paper. Section~\ref{posets} deals with defining a poset induced by a given maniplex, as well as
 with the possible complications for a maniplex to be the graph flag of a polytope; some examples are given. In Section~\ref{sec:CIP} we revisit a defining property of abstract polytopes to obtain the connected intersection property and show that such condition charaterizes not only flag graphs of polytopes, but also maniplexes whose induced order is a polytope. In Section~\ref{sec:PIP} we re-write the connected intersection property in terms of the paths of the graph to obtain the path intersection property. We finish the paper with a short section on the mix (or parallel product) of maniplexes.

Throughout the paper it shall prove convenient to denote the set $\{0,1, \dots, n-1\}$ simply by $[n]$, and if $A\subset\n$, then we use $\overline{A}$ to denote the set $\n\setminus A$.

\section{Basic notions}\label{basic}

In this section we introduce the reader to the basic notions of abstract polytopes, maps, edge-coloured graphs and maniplexes.

\subsection{Abstract polytopes}\label{sec:poly}

Convex polytopes generalize the notion of polyhedra for higher dimensions. 
Abstract polytopes are combinatorial objects whose incidence structure resemble the incidence structure of convex polytopes. 
In fact, each convex polytope can be regarded as an abstract polytope. 
We give the basics about abstract polytopes and refer the reader to \cite{ARP} for more details of their study.

An {\em (abstract) $n$-polytope} (or a polytope of rank $n$) is a ranked partially ordered set,  $(\mathcal{P}, \leq)$ whose elements are called {\em faces. }
$\mathcal{P}$ must have a unique maximal face which  has rank $n$ and a unique minimal face which has rank $-1$, all the other faces are distributed in the remaining $n$ levels that go from $0$ to $n-1$. 
Moreover, we ask that all maximal chains in the partial order have exactly one element of each rank. 

Note that this ranking of the faces rescues the concept of dimension; the faces of ranks $0$, $1$ and $n-1$ are called vertices, edges and facets, respectively. A face of rank $i$ is said to be an $i$-face of $\po$.

The maximal chains in the partial order $(\mathcal{P},\leq)$ are called {\em flags. }
The set of flags of a polytope will be denoted by $\mathcal{F}(\mathcal{P})$, and if $\Phi \in \fl(\po)$, then $(\Phi)_i$ shall denote the $i$-face of $\Phi$. 
Two flags that differ in a unique face are said to be {\em adjacent} flags.  

We also require that $\mathcal{P}$ be {\em strong flag connected}, that is, given any two flags $\Phi$ and $\Psi$, there is a sequence of adjacent flags $\Phi= \Phi_0, \Phi_1, ..., \Phi_k = \Psi $ such that $\Phi\cap \Psi \subset \Phi_i$,  for every $0\leq i \leq k$. 
Intuitively this means that given two flags in the polytope, one can ``walk" from one to the other by making changes only in the faces that are not common to both flags. 

Finally, for $\mathcal{P}$ to be an (abstract) polytope we require it to satisfy 
 the {\em diamond condition}, namely, if $i\in [n]$ and $E,F\in \mathcal{P}$ are faces of rank $i-1$ and $i+1$, respectively, with $E< F$, then there exist exactly two faces of $\mathcal{P}$ of rank $i$ that are greater than $E$ and smaller than $F$. 
 
Given $i\in\n$ and a flag $\Phi$, we can deduce from the diamond condition that there is a unique flag that differs from $\Phi$ only at the face of rank $i$. We denote such flag by $\Phi^i$ and say that the flags $\Phi$ and $\Phi^i$ are {\em $i$-adjacent}. 
For the sake of simplicity, we abbreviate  $(\Phi^i)^j$ by $\Phi ^{i,j}$. 
From the diamond condition it is easy to see that given $\Phi \in \mathcal{F}(\mathcal{P})$, we have that $\Phi ^{i,i} = \Phi$, for every $i\in [n]$. 
Also observe that if $ i,j \in\n$, with $|i-j| > 1$, then $\Phi ^{i,j} = \Phi^{j,i}$. 
It is certainly not true, in general, that the equality $\Phi^{i,i+1} = \Phi^{i+1, i}$ holds. 

Given faces $F, G \in \po$, with $F\leq G$, the interval $\{H \in \po \mid F\leq H\leq G\}$ is called a {\em section} of $\po$ and will be denoted by $G/F$. Then $G/F$
 it is a polytope on its own right.

Every polytope has several interesting graphs associated to it.
In this paper, a lot of attention will be given to the {\em flag graph} of a polytope, that we define as follows.
Given a $n$-polytope $\mathcal{P}$, its flag graph $\mathcal{G}_{\mathcal{P}}$ is the graph whose vertex set is the set of flags $\fl(\po)$ 
and whose edges are coloured with colours indexed in $[n]$, in such a way that between two vertices there is an edge of colour $i$ if and only if the two associated flags are $i$-adjacent. 

We can deduce some properties of $\mathcal{G}_{\mathcal{P}}$ from the properties of $\po$. 
For example, the flag connectivity implies that the graph $\mathcal{G}_{\mathcal{P}}$ is connected. 
In fact, given two flags $\Phi, \Psi \in \fl(\po)$, if $\{i_1, i_2, \dots, i_k\}$ is the subset of $\n$ for which $(\Phi)_{i_j}=(\Psi)_{i_j}$, then, the strong flag connectivity of $\po$ implies that, in $\G_\po$, there is a walk from $\Phi$ to $\Psi$ with edges of colours in $\overline{\{i_1, i_2, \dots, i_k\}}$.

Consider now the subgraph formed by taking all the vertices of $\mathcal{G}_{\mathcal{P}}$ and only the edges of colours $i$ and $j$. Observe that this subgraph is a union of disjoint cycles with edges of alternating colours $i$ and $j$. 
Whenever $|i-j| > 1$ all these cycles have length $4$.

The flag graph of a polytope completely determines it. 
That is, two polytopes are isomorphic (as ranked posets) if and only if their corresponding flag graphs are isomorphic (as coloured graphs).
In \cite{STG}, flag graphs of polytopes were used to study symmetry properties of the polytopes.

\subsection{Maps}

Polytopes of rank $3$ can be regarded as maps.
A {\em map}  is a cellular embedding of a connected graph into a surface without boundary, in the sense that the complement of the image of the graph is a collection of disjoin discs. 
These discs are called the {\em faces} of the map.

Let $\mathcal{BS}(\m)$ be the barycentric subdivision of $\m$ and consider a triangle $\Phi$ of $\mathcal{BS}(\m)$.
Label the vertices of $\Phi$ by $\Phi_0$, $\Phi_1$ and $\Phi_2$ according to whether they represent, respectively, the vertex, the edge, or the face of $\Phi$. 
Note that $\Phi$ is adjacent with three other triangles of $\mathcal{BS}(\m)$, each of them having exactly two of the vertices $\Phi_i$. If $\Psi$ is a triangle adjacent to $\Phi$ which does not have the vertex $\Phi_i$ of $\Phi$, we say that $\Phi$ and $\Psi$ are $i$-adjacent and denote $\Psi$ by $\Phi^i$.

With this in mind, it is easy to see a map as an edge-coloured graph $\G_\m$.
Indeed, by taking one vertex per triangle of $\mathcal{BS}(\m)$ and joining two of them by an edge of colour $i$ whenever they are $i$-adjacent, we have constructed an edge-coloured graph. 
This new graph is $3$-regular and  each vertex has an edge of each of the colours $0,1,$ and $2$. 
Further, since each edge of $\m$ belongs to 4 triangles of $\mathcal{BS}(\m)$, then if we remove all edges of colour $1$ from $\G_\m$, we obtain a collection of disjoin $4$-cycles with edges with alternating colours $0$ and $2$.

Note that non-isomorphic maps induce non-isomporhic coloured graphs. 
However, permuting the colours of the edges of $\G_\m$  does change (in general) the map. 
For example, if one interchanges the edges of colour $0$ with those of colour $2$, one gets the {\em dual} map $\m^*$ of $\m$.

\subsection{Edge-coloured graphs}

The flag graphs of maps and polytopes have many features in common, the more obvious one being that they are simple $n$-regular graphs and its edges can be coloured with $n$ colours in such a way that edges incident to one vertex have different colours. (Recall that a simple graph has no loops or multiple edges.) 
In this section we shall see some straightforward properties and give notation of graphs having these two properties.

An {\em edge colouring} of a graph $\G$ is an assignment of colours to the edges of $\G$ such that adjacent edges have different colours. If the number of colours used in an edge colouring of $\G$ is $n$, we say that it is an $n$-edge colouring. The minimal number $n$ such that there exists an $n$-edge colouring of $\G$ is called the {\em chromatic index} of $\G$.
An $n$-regular graph (in the sense that all its vertices have degree $n$) that has chromatic number $n$, together with an $n$-edge colouring is said to be a {\em properly $n$-coloured graph}.
Clearly, a vertex of a properly $n$-coloured graph has one edge of each of the $n$ colours.

Let $\G$ be a properly $n$-coloured graph and suppose that the colouring is given by the colours from a set $C$. 
For each subset $A\subset C$, we denote by $\G_A$ the graph that has all the vertices of $\G$ and edges of colours only in $A$.
In particular, if $A=\{c\}$, for some $c\in C$, then $\G_A$ is simply the perfect matching of $\G$ with edges of colour $c$. 
In this case, we often abbreviate and simply write $\G_c$ for $\G_A$. 
Similarly, if $A=C\setminus\{c\}$, then $\G_A$ will be written as $\G_{\bar{c}}$.

Whenever $|A|=i$, we say that the connected components of $\G_A$ are the $i$-factors of colours in $A$.
Note that since $\G$ is a properly $n$-coloured graph, then the $2$-factors of colours $i$ and $j$, for every $i\neq j \in C$, are cycles with edges of alternating colours $i$ and $j$.

\subsection{Maniplexes}

Maniplexes generalize (the flag graphs of) polytopes and maps at the same time.
In \cite{mani}, where they were first introduced, several equivalent definitions of maniplexes are given. 
Here, we take the most simple approach and define them as edge-coloured graphs in the following way.

An {\em $n$-maniplex} is a properly $n$-coloured simple graph, with edges of colours from $[n]$ such that the $2$-factors of colours $i$ and $j$ are $4$-cycles, whenever $|i-j|>1$.

The vertices of a maniplex are called {\em flags}, and if two flags $u$ and $v$ are adjacent by an edge of colour $i$, then they are said to be {$i$-adjacent}. In that case we often write $v$ as $u^i$ (and $u $ as $v^i$).

For each $i\in[n]$, the connected components of $\m_{\bar{i}}$ are the {\em $i$-faces} of $\m$.
The fact that  the $2$-factors of colours $i$ and $j$ are $4$-cycles, whenever $|i-j|>1$, implies the following lemma.

\begin{lemma} \label{FacesofM}
Let $\m$ be an $n$-maniplex. Let $F$ be an $i$-face of $\M$, with $i \in \{1, 2, \dots , n-2\}$. Then $F$ is the cartesian (graph) product of some connected component of $\M_{\{0,\dots, i-1\}}$ by some connected component of $\M_{\{i+1,\dots, n-1\}}$.
\end{lemma}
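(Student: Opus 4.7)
Let $A = \{0, \ldots, i-1\}$ and $B = \{i+1, \ldots, n-1\}$, so $\bar{i} = A \cup B$. Fix a flag $v_0 \in F$ and let $C_A$, $C_B$ be the connected components of $\M_A$, $\M_B$ respectively that contain $v_0$; both are subgraphs of $F$ because $A \cup B = \bar{i}$ and $F$ is the $\bar{i}$-component of $v_0$. The plan is to construct an explicit colour-preserving graph isomorphism $\Phi \colon C_A \square C_B \to F$. The single structural input is the commutation relation: for any $a \in A$ and $b \in B$ one has $|a-b| > 1$, so the hypothesis that the $2$-factors of non-adjacent colours are $4$-cycles yields $v^{a,b} = v^{b,a}$ at every vertex. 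Iterating position by position, the letters of any $A$-walk $\alpha$ and $B$-walk $\beta$ can be freely transposed: $v \cdot (\alpha \beta) = v \cdot (\beta \alpha)$ for every $v$, where $\alpha\beta$ denotes concatenation of colour sequences.

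Define $\Phi(u, w) := v_0 \cdot \alpha\beta = u \cdot \beta$, where $\alpha$ is any $A$-walk from $v_0$ to $u$ and $\beta$ any $B$-walk from $v_0$ to $w$. Three verifications are essentially automatic from the commutation relation. (i) Well-definedness: two $B$-walks from $v_0$ to $w$ differ by a closed $B$-walk $\omega$ at $v_0$, and for $u = v_0 \cdot \alpha \in C_A$ one has $u \cdot \omega = v_0 \cdot \alpha\omega = v_0 \cdot \omega\alpha = v_0 \cdot \alpha = u$; the symmetric argument handles changing $\alpha$. (ii) Colour preservation: the edge $(u,w)\sim(u^a,w)$ of $C_A \square C_B$ is sent to $\Phi(u,w)\sim\Phi(u,w)^a$, and analogously for $b$-edges. (iii) Surjectivity: every $v \in F$ is reached from $v_0$ by a $\bar{i}$-walk which, after sorting its $A$-letters to the front via the commutation relation, has precisely the form $v_0 \cdot \alpha\beta$ prescribed by $\Phi$.

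The real content is injectivity. Since $C_A \square C_B$ and $F$ are both connected and $|\bar{i}|$-regular and $\Phi$ is colour-preserving, $\Phi$ is automatically a covering of edge-coloured graphs, so it suffices to show $\Phi^{-1}(v_0) = \{(v_0, v_0)\}$. Unwinding: if $\Phi(u, w) = v_0$ then $u \cdot \beta = v_0$ for some $B$-walk $\beta$ with $v_0 \cdot \beta = w$; reading $\beta$ in reverse realises $u$ as the endpoint of a $B$-walk starting at $v_0$, so $u \in C_A \cap C_B$, and once $u = v_0$ is known, $w = v_0 \cdot \beta = u \cdot \beta = v_0$ follows. Injectivity therefore reduces to the intersection identity $C_A \cap C_B = \{v_0\}$, which is the heart of the argument. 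My plan for this last step is to take a hypothetical $y \in C_A \cap C_B$ realised by walks $\alpha$, $\beta$ from $v_0$ to $y$ of minimum total length $|\alpha| + |\beta|$, and then combine the $4$-cycle commutation with the involutive nature of each colour matching to strictly shorten the pair, contradicting minimality unless $y = v_0$. This shortening argument is the delicate part; once it is in hand, $\Phi$ is a colour-preserving bijection, which gives the desired isomorphism $F \cong C_A \square C_B$.
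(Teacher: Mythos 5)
The paper gives no proof of this lemma at all---it is asserted as an immediate consequence of the $4$-cycle condition---so the comparison is really between your argument and a bare claim. Your setup is the natural one, and you correctly locate all of the content: the commutation $v^{a,b}=v^{b,a}$ for $a\in A$, $b\in B$ does make $\Phi$ well defined, colour-preserving and surjective, the covering argument is sound, and everything reduces to the single identity $C_A\cap C_B=\{v_0\}$. But that identity is precisely the step you do not prove: you only describe a ``plan'' to shorten a minimal pair of walks and you concede that this is the delicate part. As written, the proof is incomplete at its one nontrivial point.

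Moreover, no shortening argument can close this gap, because $C_A\cap C_B=\{v_0\}$ does not follow from the definition of maniplex used in this paper, and the lemma as literally stated fails for that definition. Take $n=4$ and flags $a_1,\dots,a_8$ with colour-$2$ edges $a_1a_2$, $a_3a_4$, $a_5a_6$, $a_7a_8$; colour-$3$ edges $a_2a_3$, $a_4a_5$, $a_6a_7$, $a_8a_1$ (so $\M_{\{2,3\}}$ is a single $8$-cycle); colour-$0$ edges $a_ja_{j+4}$ (indices mod $8$); and colour-$1$ edges $a_1a_3$, $a_2a_8$, $a_4a_6$, $a_5a_7$. A direct check shows that this is a simple properly $4$-coloured connected graph and that every $2$-factor of colours $\{0,2\}$, $\{0,3\}$ and $\{1,3\}$ is a $4$-cycle, so it is a $4$-maniplex in the sense of the definition given in the paper. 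Yet for $i=1$ the $1$-face containing $a_1$ has $8$ flags, while the relevant components of $\M_{\{0\}}$ and $\M_{\{2,3\}}$ have $2$ and $8$ flags respectively, so that face cannot be their Cartesian product. The obstruction is exactly your missing identity: here $v^{0}=v^{2,3,2,3}$ for every flag, so $C_A=\{a_1,a_5\}$ is contained in $C_B$ and $C_A\cap C_B\neq\{v_0\}$. The lemma therefore requires a hypothesis beyond the $4$-cycle condition (for instance, the explicit assumption that the component of $\M_{\{0,\dots,i-1\}}$ through a flag meets the component of $\M_{\{i+1,\dots,n-1\}}$ through that flag only in that flag), and your proof---like the paper's unproved assertion---cannot be completed in the stated generality.
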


Note that for $n\leq 2$, flag graphs of polytopes and maniplexes coincide. A $0$-maniplex consists of a unique flag, which is also the $0$-face of the maniplex.  A $1$-maniplex has two flags and an edge of colour $0$ joining them. It has two $0$-faces. A $2$-maniplex is either a $2p$-cycle or an infinite path, with alternating edges of colours $0$ and $1$. The $0$-faces have two flags, and so do the $1$-faces. If the 2-maniplex is a $2p$-cycle, then it corresponds to the graph flag of an (abstract) $p$-gon, and if it is an infinite cycle, corresponds to the graph flag of an infinite abstract $2$-polytope.

\section{Maniplexes as posets}\label{posets}

Whenever a maniplex $\m$ is (isomorphic to) the flag graph of a polytope, we shall say that $\m$ is {\em polytopal}.
It is well-known that not every map can be regarded as a polytope. 
Hence, not every maniplex is polytopal.

Take, for example, the map on the torus usually denoted by $\{4,4\}_{(1,1)}$. 
This map is the quotient of the square tessellation of the plane, with vertices in $\mathbb{Z} \times \mathbb{Z}$, by the vectors $(1,1)$ and $(-1,1)$ (see Figure~\ref{(4,4)_(1,1)}). 
It is a $3$-maniplex with 16 flags, 2 vertices, 4 edges and 2 faces.
A quick inspection of the map shows that every vertex is incident to every edge and to every face. 
Thus, between a vertex and a face there are 4 edges and the diamond condition is not satisfied.
\begin{figure}[htbp]
\begin{center}
\includegraphics[width=5cm]{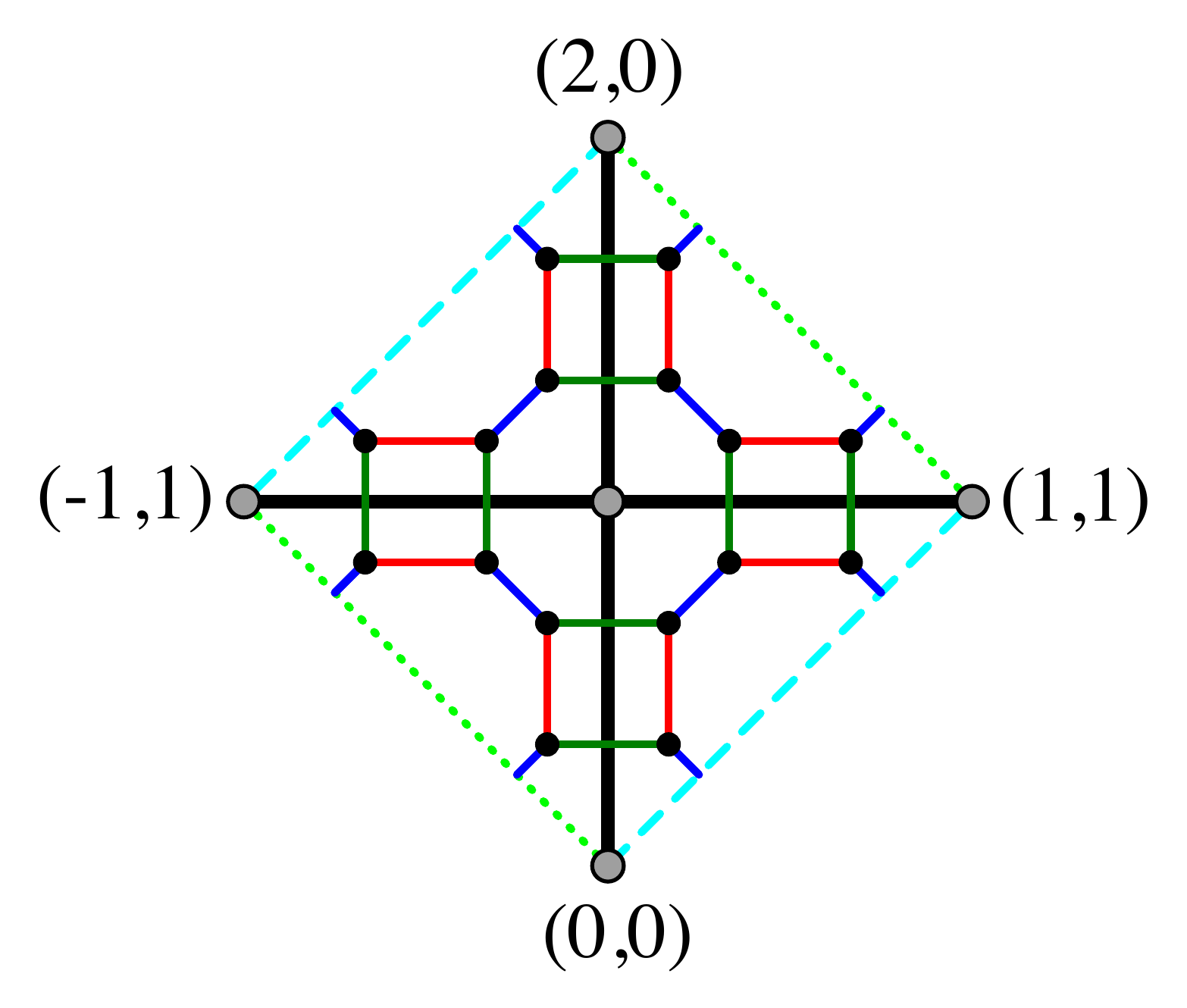}
\caption{The map $\{4,4\}_{(1,1)}$ on the torus and its graph flag.}
\label{(4,4)_(1,1)}
\end{center}
\end{figure}

Although the map considered above is not a polytope, their faces can be seen as a poset and such poset possesses all the combinatorial information of the map. 
The Hasse diagram of this poset can be seen in Figure~\ref{poset(4,4)_(1,1)}. 
It is straightforward to see that this poset, just as the map, has 16 flags, 2 vertices, 4 edges and 2 faces.
\begin{figure}[htbp]
\begin{center}
\includegraphics[width=2.5cm]{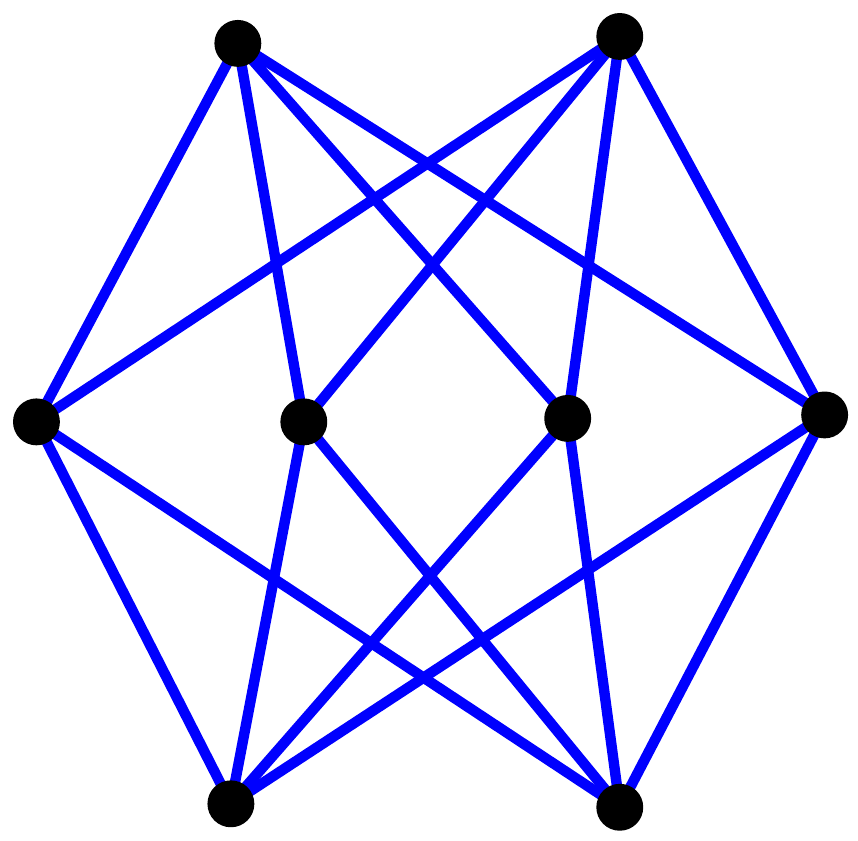}
\caption{Hasse diagram of the poset resulting from the map $\{4,4\}_{(1,1)}$.}
\label{poset(4,4)_(1,1)}
\end{center}
\end{figure}

Every maniplex $\M$ can be seen as a ranked poset in the following way.
The elements of the poset are the faces of $\M$, and the rank of an $i$-face of $\M$ is precisely $i$.
Recall that the $i$-faces of $\M$ are the connected components of $\M_{\bar{i}}$.
Given an $i$-face $F_i$ and a $j$-face $F_j$ of $\M$, we shall say that
\begin{eqnarray} \label{order}
F_i \leq F_j \ \mathrm{if \ and \ only \ if} \ i\leq j \ \mathrm{and} \ F_i\cap F_j \neq \emptyset.
\end{eqnarray}
Notice that we denote in the same way the connected components of each $\m_{\bar{i}}$ and the corresponding elements of the ordered set. 
Depending on the context it will be clear if we are talking about a graph or an element of a poset.

\begin{prop}\label{prop:order}
Let $\M$ be a maniplex. The set of faces of $\m$, together with the relation
 ``$\leq$'' as defined in (\ref{order}), is a poset.
\end{prop}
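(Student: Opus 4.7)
The plan is to verify the three defining axioms of a partial order---reflexivity, antisymmetry, and transitivity---for the relation~(\ref{order}). Reflexivity is immediate: every face $F$ is a nonempty set of flags and $F\cap F = F$. For antisymmetry, if $F\leq G$ and $G\leq F$ with $F$ an $i$-face and $G$ a $j$-face, then $i\leq j$ and $j\leq i$ force $i=j$; both $F$ and $G$ are then connected components of $\m_{\bar{i}}$, and since they share a flag they must coincide.

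The substantive step is transitivity. I would assume $F\leq G$ and $G\leq H$, where $F$, $G$, $H$ are respectively $i$-, $j$-, $k$-faces with $i\leq j\leq k$, and aim to produce a flag in $F\cap H$. First I would dispose of the degenerate cases: if $i=j$ the antisymmetry argument gives $F=G$, so $F\cap H=G\cap H\neq\emptyset$; if $j=k$, symmetrically $G=H$ and $F\cap H=F\cap G\neq\emptyset$. This reduces the problem to the strict chain $i<j<k$, which crucially forces $1\leq j\leq n-2$ and hence makes Lemma~\ref{FacesofM} applicable to the middle face $G$.

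In that case I would invoke Lemma~\ref{FacesofM} to write $G=A\times B$, where $A$ is a connected component of $\m_{\{0,\dots,j-1\}}$ and $B$ a connected component of $\m_{\{j+1,\dots,n-1\}}$. Pick $\phi=(a_\phi,b_\phi)\in F\cap G$ and $\psi=(a_\psi,b_\psi)\in G\cap H$, and propose the cross-term $\chi:=(a_\phi,b_\psi)\in G$ as the witness. From $\phi$ to $\chi$ there is a path inside $\{a_\phi\}\times B$ whose edge-colours lie in $\{j+1,\dots,n-1\}$; since $i<j$, none of these colours equals $i$, so $\chi$ sits in the same component of $\m_{\bar{i}}$ as $\phi$, giving $\chi\in F$. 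Analogously, from $\psi$ to $\chi$ there is a path in $A\times\{b_\psi\}$ whose colours lie in $\{0,\dots,j-1\}$, none equal to $k$, so $\chi\in H$, and therefore $\chi\in F\cap H$.

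The main obstacle is exactly the strict-chain case: neither $F=G$ nor $G=H$ is available, and one really needs the factorisation provided by Lemma~\ref{FacesofM} together with the observation that the ``low'' colour $i$ appears only in the $A$-factor and the ``high'' colour $k$ only in the $B$-factor. Choosing the cross-term $(a_\phi,b_\psi)$---rather than $(a_\psi,b_\phi)$---is what makes the two reachability statements fit together and close the argument.
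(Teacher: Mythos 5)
Your proof is correct and follows essentially the same route as the paper: reduce to the strict case $i<j<k$, factor the middle face via Lemma~\ref{FacesofM}, and take the cross-term $(a_\phi,b_\psi)$ as the common flag (the paper's $w=(v_0,u_1)$). The only difference is that you spell out reflexivity, antisymmetry, and the degenerate cases a little more explicitly than the paper does.
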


\begin{proof}
Since the faces of $\M$ are connected components of the graph, the definition of $``\leq''$ immediately implies that it is reflexible and anti-symmetric. 
We only need to show the transitivity.
Let $E,F$ and $G$ be faces of $\M$ such that $E\leq F$ and $F\leq G$ and let $v\in E\cap F$ and $u\in F\cap G$. 

It is clear that if the ranks of $E,F,G$ are $i,j,k$, respectively, then $i\leq j \leq k$. 
Thus, we only need to show that $E\cap G \neq \emptyset$. 

We shall  find a flag $w \in F$ such that there is a path with edges of colours $0,\dots, j-1$ from $u$ to $w$ and another path with edges of colours $j+1,\dots, n-1$ form $v$ to $w$. 
It is straightforward to see that such $w$ is then an element of $E \cap G$.

Suppose that $E\neq F \neq G$, as otherwise $E\cap G \neq \emptyset$ trivially. 
Hence, $i< j < k$ and thus $0<j<n-1$. 
By Lemma~\ref{FacesofM}, the $j$-face $F$ is the product of two graphs, say $F_<$ and $F_>$ such that $F_<$ is a connected component of $\M_{\{0,\dots, j-1\}}$ and $F_>$ is a connected component of $\M_{\{j+1, \dots, n-1\}}$.
Hence, we can write $v=(v_0,v_1)$, $u=(u_0,u_1)$, with $v_0, u_0 \in F_<$ and $v_1, u_1 \in F_>$. The vertex $w=(v_0,u_1)\in F$ has the required property.
\end{proof}

Let $\po_\m$ be the set of all faces of $\m$, together with a minimum and a maximum element, denoted by $F_{-1}$ and $F_n$, respectively.
It is clear that  $\po_\m$ is a ranked poset. 
The ranks of $F_{-1}$ and $F_n$ are set to be $-1$ and $n$, respectively. These two new faces are the {\em improper} faces of $\po_\m$.  

It is not difficult now to show that all the maximal chains of $\po_\m$ have $n+2$ elements. 
Suppose that $E$ and $G$ are two incident faces of ranks $i$ and $k$, respectively, with $i<k-1$, and let $w \in E \cap G$. 
For each $j\in \{i+1, \dots, k-1\}$ consider $F_j$, the connected component of $\m_{\overline{j}}$ containing $w$. 
Since $w$ has one edge of each colour, such $F_j$ clearly exists.
By definition, $F_j$ is a $j$-face of $\m$, and $w\in E\cap F_j\cap G$ implying that both $E$ and $G$ are incident to $F_j$.

The idea of the proof of Proposition~\ref{prop:order} can be extended to show the following lemma. 

\begin{lemma}\label{lemma:nonemptychains}
Let $\{F_1, F_2, \dots F_k\}$ be a chain of proper faces of $\po_\m$. Then $\bigcap_{j=1}^k F_{j}$ is non-empty.
\end{lemma}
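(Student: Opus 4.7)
My plan is to prove the lemma by induction on $k$, mimicking and iterating the construction already used in the proof of Proposition~\ref{prop:order}. Without loss of generality I may assume that the ranks $i_1 < i_2 < \cdots < i_k$ are strictly increasing, since antisymmetry forces distinct elements of the chain to have distinct ranks. The cases $k=1$ (a face is a non-empty connected component) and $k=2$ (the definition of $\leq$) are immediate.

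For the inductive step, assume a flag $v\in\bigcap_{j=1}^{k-1}F_j$ exists, and pick any flag $u\in F_{k-1}\cap F_k$. I will construct a flag $w\in\bigcap_{j=1}^{k}F_j$. Since $0 < i_{k-1} < n-1$, Lemma~\ref{FacesofM} gives a product decomposition $F_{k-1} = F_{k-1}^{<}\times F_{k-1}^{>}$, where $F_{k-1}^{<}$ is a connected component of $\m_{\{0,\dots,i_{k-1}-1\}}$ and $F_{k-1}^{>}$ is a connected component of $\m_{\{i_{k-1}+1,\dots,n-1\}}$. Writing $v=(v_0,v_1)$ and $u=(u_0,u_1)$ inside this product, I set $w:=(v_0,u_1)\in F_{k-1}$.

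The key observation is then that $w$ is joined to $v$ by a path whose edges all have colours in $\{i_{k-1}+1,\dots,n-1\}$ (walking inside the second factor from $v_1$ to $u_1$), and to $u$ by a path whose edges all have colours in $\{0,\dots,i_{k-1}-1\}$ (walking inside the first factor from $u_0$ to $v_0$). For $j<k-1$, the colours along the first path are all strictly greater than $i_{k-1}>i_j$, hence different from $i_j$, so the path lies in $\m_{\overline{i_j}}$ and $w$ stays in the connected component $F_j$ of $v$. For $j=k-1$ the containment $w\in F_{k-1}$ is immediate. For $j=k$, the colours along the second path are all strictly less than $i_{k-1}<i_k$, so the path lies in $\m_{\overline{i_k}}$ and $w$ remains in the connected component $F_k$ of $u$. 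Therefore $w\in\bigcap_{j=1}^{k}F_j$.

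The only genuine subtlety is making sure the colour sets used to reach $w$ really avoid every forbidden colour $i_j$ of the chain; this is precisely where the strict monotonicity $i_1<\cdots<i_k$ is used, bracketing the two colour sets $\{0,\dots,i_{k-1}-1\}$ and $\{i_{k-1}+1,\dots,n-1\}$ around the pivot rank $i_{k-1}$. Once this is noticed, the proof is a direct iteration of the argument already given for chains of length three in Proposition~\ref{prop:order}, so no new combinatorial input is needed.
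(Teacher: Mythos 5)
Your proof is correct and takes exactly the route the paper intends: the paper gives no explicit proof of this lemma, saying only that ``the idea of the proof of Proposition~\ref{prop:order} can be extended,'' and your induction --- decomposing the second-highest face $F_{k-1}$ via Lemma~\ref{FacesofM} and taking $w=(v_0,u_1)$ so that the high-coloured path preserves membership in $F_1,\dots,F_{k-2}$ and the low-coloured path preserves membership in $F_k$ --- is precisely that extension, carried out carefully.
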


Although Proposition~\ref{prop:order} says that given a maniplex, we can define an order on its faces simply by the non-empty intersection of them, it does not mean that the defined poset possesses much information about the maniplex. 
To clarify this, we give another example of a map (i.e. a 3-maniplex) that is not the flag graph of a polytope.
Consider the map on the torus $\m=\{4,4\}_{(1,0)}$.
The maniplex corresponding to this map and the Hasse diagram of the poset $\po_\m$ are given in Figure~\ref{fig:(4,4)_(1,0)}.
The maniplex has 8 flags, 1 vertex (of degree 4), 2 edges and 1 face (a square). Clearly, it does not satisfy the diamond condition.
Furthermore, the induced poset $\po_\m$ has only two maximal chains. 
Hence, given only the poset, we know how many $i$-faces the maniplex has, but
we cannot obtain all information about the original maniplex. 
In fact, the maniplex in Figure~\ref{Klein} has the same induced order (on the left in Figure~\ref{fig:(4,4)_(1,0)}). 
These two maniplexes (in Figures~\ref{fig:(4,4)_(1,0)} and~\ref{Klein}) are different. In particular, one induces a map on the torus and the other a map on the Klein Bottle.

\begin{figure}[htbp]
\begin{center}
\includegraphics[width=7cm]{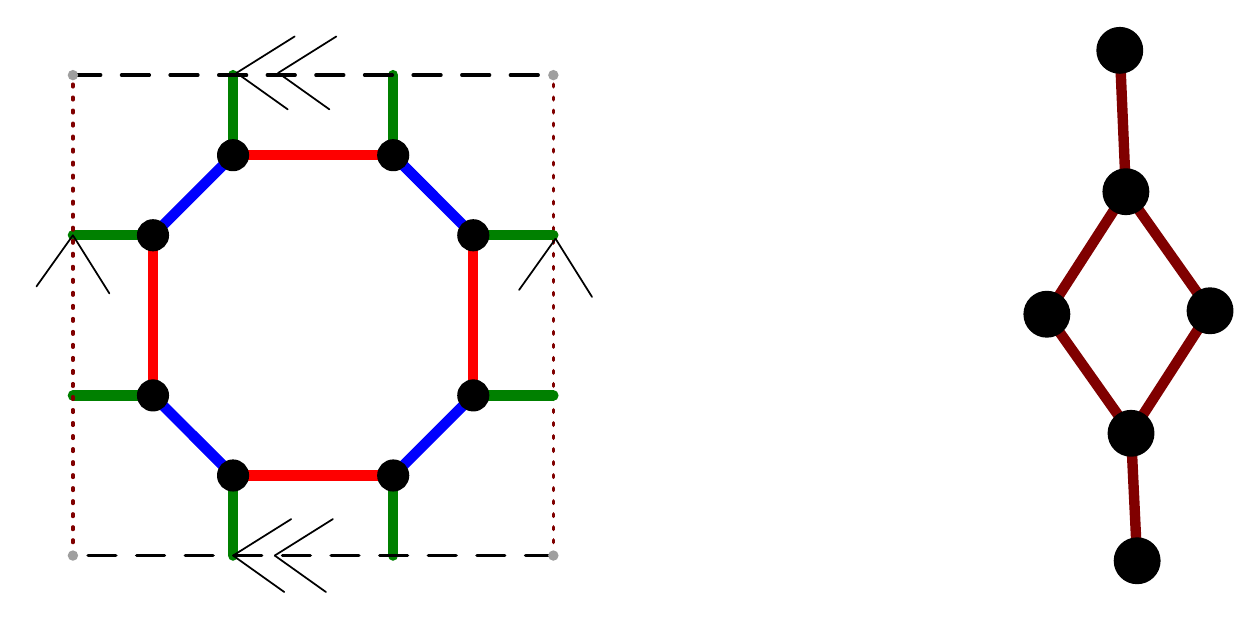}
\caption{The map $\M=\{4,4\}_{(1,0)}$ on the torus and the Hasse diagram of $\po_\m$.}
\label{fig:(4,4)_(1,0)}
\end{center}
\end{figure}

\begin{figure}[htbp]
\begin{center}
\includegraphics[width=3cm]{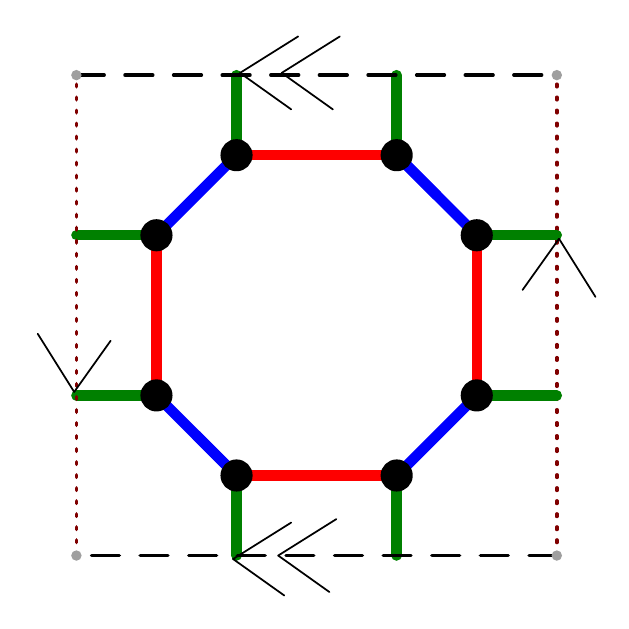}
\caption{A map on the Klein Bottle.}
\label{Klein}
\end{center}
\end{figure}

Whenever there is a bijection between the maximal chains of the poset $\po_\m$ induced by the maniplex $\m$ and the flags of $\m$, we say that $\po_\m$ is {\em faithful}.
In general, flags of maniplexes might not be completely determined by the faces they are contained in. That is, there can be different flags contained in exactly the same faces (as in the above example). However, when the induced poset is faithful, then flags of the maniplex are completely determined by their faces. 
In fact, in this case given a maximal chain $\Phi=\{F_0, F_1, \dots, F_{n-1}\}$ of the induced poset, then $\Phi$ represents the unique flag of $\m$.
In other words, $\po_\m$ is faithful if and only if for every maximal chain $\{F_0, F_1, \dots, F_{n-1}\}$ of $\po_\m$, we have that $\bigcap_{i=0}^{n-1} F_i$ consists of a unique vertex.

\section{The component intersection property}\label{sec:CIP}
When one studies 3-maniplexes (or maps) that are not polytopal, it is easy to get the idea that the problem with polytopality is the diamond condition. In this section, we hope to convince the reader that this is far from the truth, and that, philosophically speaking, the only thing that can really fail is the strong connectivity. 
In fact, we shall see that if the poset corresponding to a given maniplex is faithful and strongly flag connected, then it satisfies the diamond condition.
Before getting into that discussion, we shall analyze the meaning of the strong flag connectivity of a polytope in terms of its flag graph.

Let $\po$ be a polytope and $\m=\G_\po$ be its flag graph. 
Let $\Phi, \Psi \in \fl(\po)$ and let $v_\Phi,v_\Psi \in \G_\po$ be the corresponding vertices.
That is, we think $\Phi$ as a maximal chain of the order $\po$ and $v_\Phi$ as the vertex of $\G_\po$ induced by $\Phi$.
As we pointed out in Section~\ref{sec:poly}, the strong flag connectivity of $\po$ implies that there is a 
path from $v_\Phi$ to $v_\Psi$ that has all its edges of colours different to the ranks of the faces in $\Phi\cap\Psi$.

Let us see what this means in terms connected components of $\G_\po$. 
Suppose that $\Phi\cap\Psi = \{F_{1}, F_{2}, \dots F_{k}\}$, where $F_{j}$ has rank $i_j\in\{0,1,\dots, n-1\}$.
This means that for each $j=1,\dots, k$,  $F_{j}$ can be regarded as a connected component of $\m_{\overline{i_j}}$ containing both $v_\Phi$ and $v_\Psi$. 
Hence, $v_\Phi,v_\Psi\in \bigcap_{j=1}^k F_{j}$.
The strong flag connectivity of $\po$ implies that there is a path from $v_\Phi$ to $v_\Psi$ with no edge of colour $i_1, i_2, \dots, i_k$. 
Therefore, the path is contained in $F_{j}$, for each $j=1,\dots k$ and thus it is contained in $\bigcap_{j=1}^k F_{j}$.

Now let $\m$ be an $n$-maniplex, and let $\{i_1, i_2, \dots i_k\}$ be a subset of $\n$ with $i_1<i_2<\dots <i_k$.
Consider a set of faces $G_1, G_2, \dots G_k$, where $G_j$ is a $i_j$-face, such that $G_j\cap G_{j+1} \neq \emptyset$ for all $j$.
Then, $\{G_1, G_2, \dots G_k\}$ can be regarded as a chain of the poset $\po_\m$ and $\bigcap_{j=1}^k G_{j}$ is a subgraph of $\m_{\overline{i_1, i_2, \dots, i_k}}$, that may have several components.

\begin{definition}
Let $\m$ be a $n$-maniplex and let $\po_\m$ be the poset defined by its faces. We say that $\m$ has the {\em component intersection property (or CIP)} if for every chain $\{G_1, G_2, \dots G_k\}$ of the poset $\po_\M$ we have that $\bigcap_{j=1}^k G_{j}$ is connected.
\end{definition}

Recall that in the above definition $G_i$ stands for both an element of the poset $\po_\m$ and a subgraph of $\m$. The intersection $\bigcap_{j=1}^k G_{j}$ is, then, considered as a subgraph of $\m$.

Although we use $\po_\m$ to define the CIP in a maniplex, this is not strictly necessary. We could state the definition in terms of connected components of the subgraphs $\m_{\overline{i}}$ and their non empty intersection. However, using $\po_\m$ helps us to have a cleaner definition. 

\noindent {\bf Fact:} Not every maniplex has the CIP. 

One can see that in the map $\M=\{4,4\}_{(1,1)}$  the intersection of a connected component of $\m_{\{0,1\}}$ with a connected component of $\m_{\{1,2\}}$ consists of two edges of colour $1$ (Figure~\ref{CIP{4,4}(1,1)}). Also, in the map $\m=\{4,4\}_{(1,0)}$  the intersection of a connected component of $\m_{\{i,j\}}$ with a connected component of $\m_{\{j,k\}}$ consists of either two or four edges of colour $k$ (where $\{i,j,k\}=\{0,1,2\}$) (Figure~\ref{CIP{4,4}(1,0)}).
\begin{figure}[htbp]
\begin{center}
\includegraphics[width=11cm]{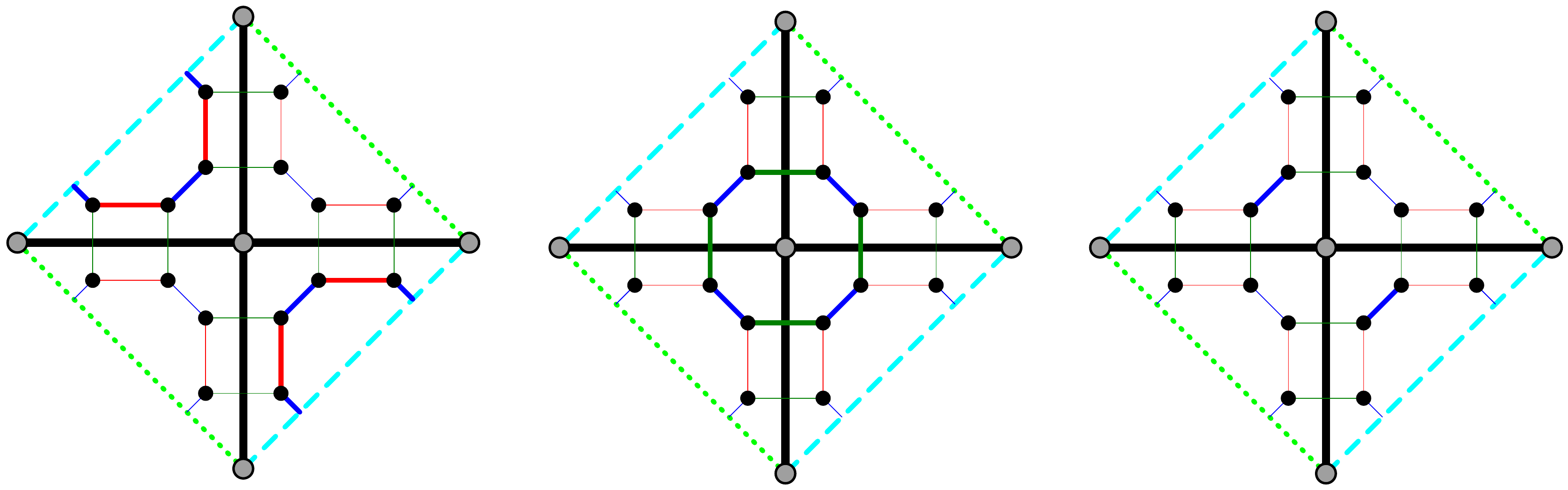}
\caption{A connected component of $\m_{\{0,1\}}$ and a  connected component of $\m_{\{1,2\}}$, whose intersection is not connected.}
\label{CIP{4,4}(1,1)}
\end{center}
\end{figure}

\begin{figure}[htbp]
\begin{center}
\includegraphics[width=9.5cm]{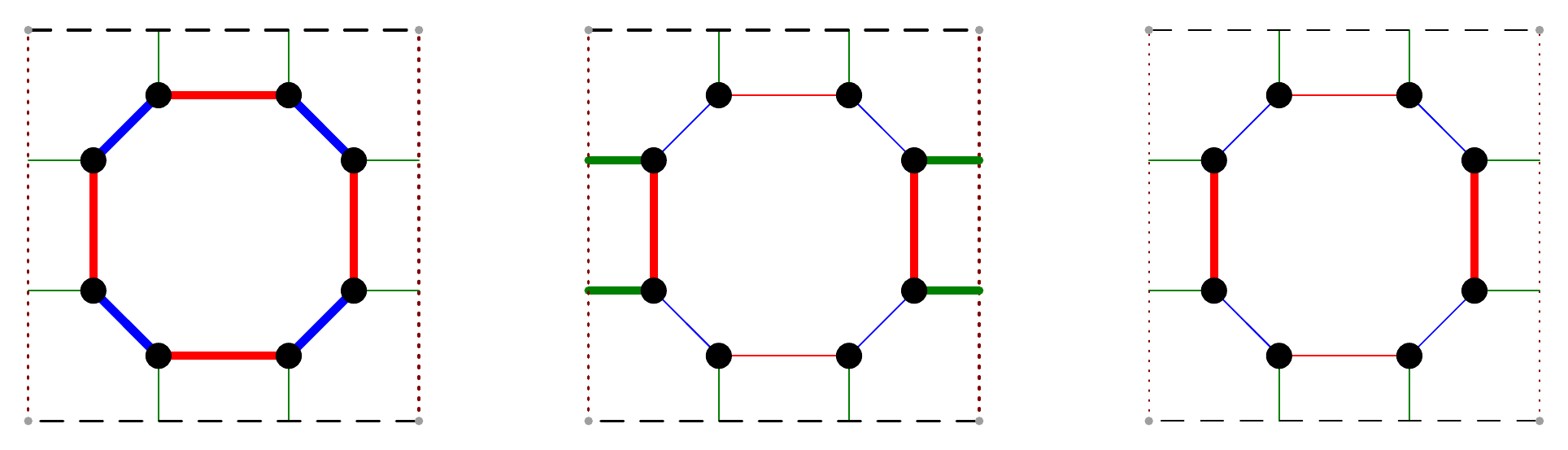}
\caption{A connected component of $\m_{\{0,1\}}$ and a  connected component of $\m_{\{0,2\}}$, whose intersection is not connected.}
\label{CIP{4,4}(1,0)}
\end{center}
\end{figure}

The discussion leading to the definition of the CIP
tells us that the strong connectivity of $\po_\m$ and the CIP are properties that are closely related In fact, we have the following result.

\begin{lemma}\label{SFC<->CIP}
Let $\m$ be a maniplex such that its induced poset $\po_\m$ is faithful. Then, $\po_\m$ is strongly flag connected if and only if $\m$ has the CIP.
\end{lemma}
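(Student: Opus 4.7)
The plan is to use the faithfulness of $\po_\m$ to set up a bijective dictionary between maximal chains of $\po_\m$ and vertices of $\m$, and then translate strong flag connectivity and CIP into one another through this dictionary. First I would record two consequences of faithfulness. \emph{Fact (a):} Maximal chains of $\po_\m$ are in bijection with vertices of $\m$; faithfulness guarantees that each maximal chain $\Phi=\{F_0,\dots,F_{n-1}\}$ has $\bigcap_iF_i$ equal to a single vertex $v_\Phi$, and conversely any vertex $v$ determines the maximal chain whose $i$-face is the connected component of $\m_{\overline{i}}$ containing $v$. \emph{Fact (b):} Two distinct maximal chains $\Phi,\Phi'$ are adjacent (differing only at rank $r$) if and only if $v_\Phi$ and $v_{\Phi'}$ are joined by an edge of colour $r$ in $\m$. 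Indeed, an $r$-edge leaves every $i$-face with $i\neq r$ invariant, so the chains associated with $v$ and $v^r$ agree off rank $r$; they must actually differ at rank $r$, for otherwise the distinct vertices $v$ and $v^r$ would correspond to the same maximal chain, contradicting faithfulness.

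For the direction $\Rightarrow$, suppose $\po_\m$ is strongly flag connected. Given a chain $\{G_1,\dots,G_k\}$ of $\po_\m$ with $G_j$ of rank $i_j$, and two vertices $u,v\in\bigcap_jG_j$, the associated maximal chains $\Phi_u,\Phi_v$ both contain every $G_j$. Strong flag connectivity yields a sequence $\Phi_u=\Psi_0,\dots,\Psi_m=\Phi_v$ of adjacent maximal chains with $\Phi_u\cap\Phi_v\subseteq\Psi_t$ for all $t$, so every $G_j$ belongs to every $\Psi_t$ and the corresponding vertex $w_t$ lies in every $G_j$. By Fact~(b), consecutive $w_t,w_{t+1}$ are joined by an edge whose colour $r_t$ lies outside $\{i_1,\dots,i_k\}$: the rank at which $\Psi_t$ and $\Psi_{t+1}$ differ cannot equal any $i_j$, since both chains contain $G_j$. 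That edge lies in $\m_{\overline{i_j}}$ for each $j$; since its endpoints sit in the connected component $G_j$ of $\m_{\overline{i_j}}$, so does the edge. The walk $u=w_0,\dots,w_m=v$ therefore lies inside $\bigcap_jG_j$, proving CIP.

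For the converse, assume $\m$ has CIP and consider maximal chains $\Phi,\Psi$ with common part $\Phi\cap\Psi=\{F_{i_1},\dots,F_{i_k}\}$. Both $v_\Phi$ and $v_\Psi$ lie in $\bigcap_jF_{i_j}$, which is connected by CIP, so I can pick a walk $v_\Phi=w_0,\dots,w_m=v_\Psi$ inside this intersection. Fact~(b) turns each edge $w_tw_{t+1}$ into an adjacency of the corresponding distinct maximal chains $\Xi_t,\Xi_{t+1}$ (distinctness follows because $w_t\neq w_{t+1}$ and Fact~(a) is a bijection); since each $w_t$ lies in every $F_{i_j}$, each $\Xi_t$ contains $\Phi\cap\Psi$. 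This is exactly the sequence of adjacent flags required by strong flag connectivity.

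The delicate piece throughout is Fact~(b): faithfulness is precisely what ensures that an $r$-edge of $\m$ induces a \emph{genuine} rank-$r$ swap of maximal chains (not two distinct vertices sharing one chain) and, conversely, that a rank-$r$ swap in a chain comes from a true $r$-edge of $\m$. Once this dictionary is installed, both implications become parallel translations between walks in specific subgraphs of $\m$ and chains of flag-adjacencies in $\po_\m$, so that strong flag connectivity becomes essentially CIP rephrased in the language of chains.
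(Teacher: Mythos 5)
Your proposal follows the paper's proof almost step for step: both directions are obtained by using faithfulness to pass back and forth between vertices of $\m$ and maximal chains of $\po_\m$. Your converse direction (CIP implies strong flag connectivity) is essentially identical to the paper's and is fine; there you only need the half of your Fact~(b) that you actually justify, namely that an $r$-edge of $\m$ produces two maximal chains differing exactly at rank $r$.

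The problem is the other half of Fact~(b), which is the half you invoke in the forward direction. You state Fact~(b) as an equivalence, but your justification only proves ``$r$-edge $\Rightarrow$ chains adjacent at rank $r$''. The converse, ``chains adjacent at rank $r$ $\Rightarrow$ the corresponding vertices are joined by an $r$-edge'', does not follow from faithfulness: it amounts to saying that a maximal chain has a \emph{unique} rank-$r$ neighbour in $\po_\m$, i.e.\ a diamond condition at rank $r$, and at this stage of the argument the diamond condition is not available (in the paper it is \emph{deduced} from the CIP). Concretely, in the map $\{4,4\}_{(1,1)}$ the induced poset is faithful (its $16$ maximal chains are in bijection with its $16$ flags), yet a maximal chain $(v,e,f)$ has three distinct rank-$1$ neighbours $(v,e',f)$, only one of which is joined to it by a $1$-edge of the flag graph; so Fact~(b) read as an ``if and only if'' is false for faithful posets in general. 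What you actually know about $w_t$ and $w_{t+1}$ is only that both lie in $\bigcap_{i\neq r_t}(\Psi_t)_i$, a subgraph of the perfect matching $\m_{r_t}$, and without the CIP that subgraph may be disconnected, so you cannot conclude that they are joined by an edge. To be fair, the paper's own proof elides exactly the same point (``this sequence induces a path in $\m$ \dots\ faithfulness implies that the induced path finishes at $u$''), so your write-up is no less rigorous than the original; but since you isolate the claim explicitly as an equivalence, you owe a proof of its forward half under the standing hypotheses (faithfulness together with strong flag connectivity), or a restructuring of the forward implication that avoids it.
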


\begin{proof}
Suppose that $\po_\m$ is strongly flag connected and let $\{G_1, G_2, \dots G_k\}$ be a chain of $\po_\m$. We need to show that $\bigcap_{j=1}^k G_{j}$ is connected, so let $v, u \in \bigcap_{j=1}^k G_{j}$. Consider $\Phi_v, \Phi_u$ the (unique) maximal chains of $\po_\m$ corresponding to $v$ and $u$, respectively. Since $\po_\m$ is strongly flag connected, then there is a sequence of adjacent flags for $\Phi_v$ to $\Phi_u$ all containing the faces $\{G_1, G_2, \dots G_k\}$. This sequence induces a path in $\m$ that starts at $v$ and that travels in $\bigcap_{j=1}^k G_{j}$. The fact that $\po_\m$ is faithful implies that the induced path finishes at $u$, and thus that $\bigcap_{j=1}^k G_{j}$ is connected. (Note that if $\po_\m$ is not faithful, but it is flag connected, then the sequence of adjacent flags from $\Phi_v$ to $\Phi_u$ starts at $v$ but might finish at a vertex $u'\neq u$ which is contained in exactly the same faces than $u$.)

Now suppose that $\m$ has the CIP and let $\Phi$ and $ \Psi$ be two maximal chains of $\po_\m$.
Then, $\Phi=\{F_{-1}, F_0, F_1, \dots, F_{n-1}, F_n\}$ and $\Psi=\{F_{-1}, G_0, G_1, \dots ,G_{n-1}, F_n\}$, where $F_i$ and $G_i$ are $i$-faces of $\m$.
Let $\{i_1, i_2, \dots ,i_k\}$ be the subset of $ \n$ containing all the indices satisfying that $F_{i_j}=G_{i_j}$.

By Lemma~\ref{lemma:nonemptychains}, $\bigcap_{i=0}^{n-1} F_{i} \neq \emptyset \neq \bigcap_{i=0}^{n-1} G_{i}$.
Let $v\in\bigcap_{i=0}^{n-1} F_{i}$ and $u \in \bigcap_{i=0}^{n-1} G_{i}$.
Hence, $v,u\in \bigcap_{j=1}^k F_{i_j} = \bigcap_{j=1}^k G_{i_j}$. 
Since $\m$ has the CIP, then $\bigcap_{j=1}^k F_{i_j}$ is connected implying there is a path from $v$ to $u$ with
all its edges of colours in $\overline{\{i_1, i_2, \dots i_k\}}$.
The path from $v$ to $u$ in $\m$ defines a sequence of adjacent flags from $\Phi$ to $\Psi$ and the fact that all the edges of the path have colours in  $\overline{\{i_1, i_2, \dots, i_k\}}$ implies that the adjacencies between the flags of the sequence are all in  $\overline{\{i_1, i_2, \dots ,i_k\}}$. Therefore $\po_\m$ is strongly flag connected.
\end{proof}

We now turn our attention to the diamond condition.
The diamond condition of $\po_\m$ has no effect on the fact that $\m$ satisfies the CIP. 
If $\po$ is a polytope, then it has the diamond condition and $\G_\po$ satisfies the CIP. However,
consider the following example, that also has the diamond condition, but does not satisfy the CIP.

Let $\cal T$ be the tessellation of Euclidean 3-space by cuboctahedra and octahedra. 
This tessellation can be obtained from the cube tessellation of Euclidean 3-space by fully truncating each of the cubes. Assuming that the vertices of the cube tessellation (before truncating) coincide with the integer lattice $\mathbb{Z}^3$, consider the vectors $v_1=(0,2,0)$, $v_2=(1,0,0)$ and $v_3=(1,0,2)$.
Then quotient the tessellation $\cal T$ by the translation group $\Lambda$ generated by $v_1, v_2$ and $v_3$.
We then obtain a tessellation of the $3$-torus by cuboctahedra and octahedra (see Figure~\ref{fig:TT}, where vertices of the same colour are identified). 
This tessellation can be regarded as a maniplex and as a poset.
It is not difficult to see that the induced poset satisfies the diamond condition (one can use the symmetry to check only few cases), however, the CIP is not satisfied in the induced maniplex.
This can be seen by taking the green vertex labelled $A$ in Figure~\ref{fig:TT} and the cuboctahedron at the upper right corner (of the figure). 
These faces are incident to each other, however the corresponding section of the induced poset is not connected.
In fact, the $3$-maniplex induced by the vertex is isomorphic to a cube (where the connections have colours $1$, $2$, and $3$), and the intersection of such $3$-maniplex and the maniplex induced by the cuboctahedron is the disjoin union of two $8$-cycles with edges of alternating colours $1$ and $2$.
\begin{figure}[htbp]
\begin{center}
\includegraphics[width=6cm]{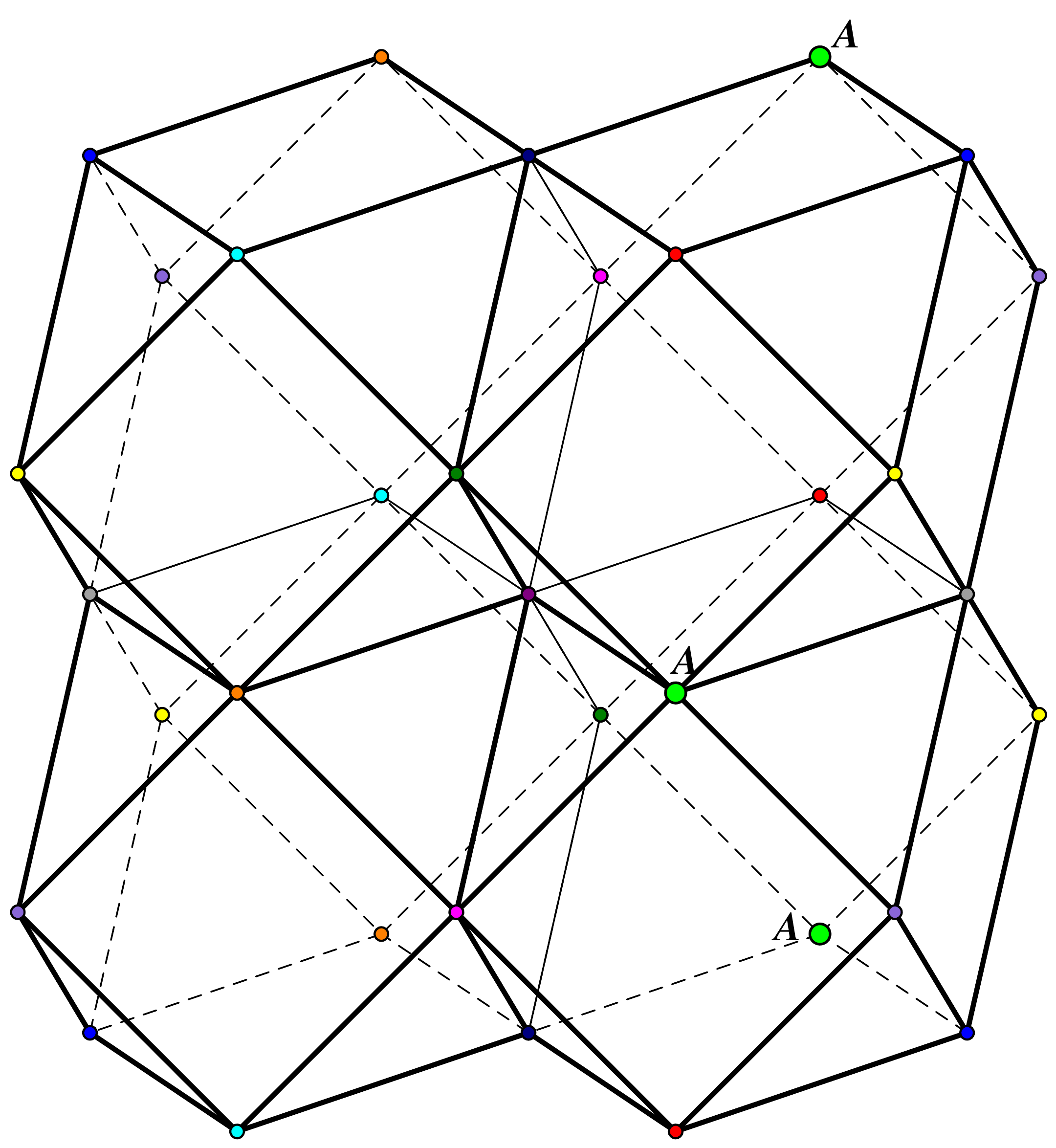}
\caption{By identifying vertices with the same colours, we obtain a $3$-torus tessellated by 4 cuboctahedra and 4 octahedra that satisfies the diamond condition, when considered as a poset. The tessellation induces a maniplex that is not polytopal.}
\label{fig:TT}
\end{center}
\end{figure}

We have seen that not all maniplexes have the CIP. 
Even those whose induced poset satisfy the diamond condition might fail to have the CIP.
However,  maniplexes that are the flag graphs of polytopes satisfy the CIP. 
We now show that in fact having the CIP is a sufficient condition for $\po_\m$ to have the diamond condition.

\begin{lemma}\label{CIP->POL}
Let $\m$ be a $n$-maniplex having the CIP. Then $\po_\m$ satisfies the diamond condition.
\end{lemma}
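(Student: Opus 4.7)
The plan is to pin down $E\cap F$ as an explicit connected component via the CIP, then expose its Cartesian-product structure so that counting $i$-faces reduces to two obvious pieces. Fix faces $E$ of rank $i-1$ and $F$ of rank $i+1$ with $E<F$ in $\po_\m$, where $i\in[n]$; set $S:=[n]\setminus\{i-1,i+1\}$ and pick $v\in E\cap F$. In the boundary cases $i=0$ or $i=n-1$, where one of $E,F$ is improper, I would treat the improper face as the whole maniplex $\m$, dropping the missing element from $\{i-1,i+1\}$ when forming $S$.

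First I would prove $E\cap F=[v]_S$, where $[v]_S$ denotes the connected component of $v$ in $\m_S$. The inclusion $[v]_S\subseteq E\cap F$ is immediate because any edge whose colour lies in $S$ is an edge of both $\m_{\overline{i-1}}$ and $\m_{\overline{i+1}}$, so a walk from $v$ through $\m_S$ never leaves $E$ or $F$. The reverse inclusion is where the CIP enters: applied to the chain $\{E,F\}$ it forces $E\cap F$ to be connected, and since every edge of $E\cap F$ carries a colour in $S$, any $u\in E\cap F$ can be joined to $v$ inside $\m_S$, and hence lies in $[v]_S$.

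Next I would decompose $[v]_S$ as a Cartesian product. Partition $S$ as $A\sqcup\{i\}\sqcup B$ with $A=\{0,\dots,i-2\}$ and $B=\{i+2,\dots,n-1\}$ (one or both blocks may be empty near the boundary values of $i$). Any two colours belonging to distinct blocks differ by at least $2$, so the $4$-cycle argument underlying Lemma~\ref{FacesofM} applies iteratively and yields an isomorphism of properly edge-coloured graphs
\[
[v]_S \;\cong\; [v]_A \times [v]_{\{i\}} \times [v]_B,
\]
in which $[v]_{\{i\}}=\{v,v^i\}$ is the single colour-$i$ edge through $v$, and the colour-$j$ edges of $[v]_S$ live in the first, middle, or third factor according to whether $j\in A$, $j=i$, or $j\in B$.

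To finish, I would identify the $i$-faces $G$ with $E\le G\le F$ with the connected components of $[v]_S$ after deleting its colour-$i$ edges. Each such $G$ meets $[v]_S=E\cap F$, and two vertices of $[v]_S$ lie in the same $G$ if and only if they are joined in $[v]_S$ by a path avoiding colour $i$; the non-trivial implication uses CIP on the chain $\{E,G,F\}$, forcing $E\cap G\cap F$ to be connected through edges of colours in $S\setminus\{i\}$. Deleting the colour-$i$ edges affects only the middle factor of the decomposition, breaking $[v]_S$ into exactly the two components $[v]_A\times\{v\}\times[v]_B$ and $[v]_A\times\{v^i\}\times[v]_B$. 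The main obstacle is justifying this decomposition, because Lemma~\ref{FacesofM} is only stated for the sets $\m_{\overline{j}}$; one has to observe that its $4$-cycle proof extends verbatim to any partition of a colour set into pairwise-distance-$>1$ blocks and can be applied iteratively, with the boundary values of $i$ simply collapsing the three-fold product to a two-fold one.
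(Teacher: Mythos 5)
Your proof is correct, but it takes a genuinely different route from the paper's. The paper extends the pair $E<F$ to a chain of $\po_\m$ containing a face of every rank except $i$, applies the CIP once to that long chain, and observes that the resulting connected intersection is a connected component of the perfect matching $\m_i$, i.e.\ a single edge of colour $i$, whose two endpoints supply the two middle faces. You instead stay inside the section between $E$ and $F$: you identify $E\cap F$ with a component of $\m_S$ via the CIP on the chain $\{E,F\}$, decompose that component as a threefold Cartesian product (a mild generalization of Lemma~\ref{FacesofM} to a partition of $S$ into blocks at pairwise colour-distance greater than $1$), and count the components left after deleting the colour-$i$ edges, using the CIP on $\{E,G,F\}$ to show that these components are in bijection with the $i$-faces between $E$ and $F$. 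The paper's argument is shorter and needs no product structure; yours is more explicit on the point of why \emph{every} $i$-face $G$ with $E<G<F$ is accounted for and why the two candidates are distinct (the paper's closing sentence passes from ``two flags extending the long chain'' to ``the diamond condition'' without comment), at the price of having to justify the generalized decomposition, which the paper states, without proof, only in the special case of Lemma~\ref{FacesofM}. Both costs are acceptable; your version is a valid and, if anything, more careful proof.
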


\begin{proof}
Let $F_{i-1}$ and $F_{i+1}$ be two incident faces of $\po_\m$ having ranks $i-1$ and $i+1$, respectively. 
Since all maximal chains have one element of each rank, then there exist faces $F_0,\dots, F_{i-2}, F_{i+2}, \dots, F_{n-1}$ such that $\{F_{-1}, F_0, \dots, F_{i-1}, F_{i+1}, \dots, F_{n-1}, F_n\}$ is a chain of $\po_\m$ having elements of all ranks but of rank $i$.
By Lemma~\ref{lemma:nonemptychains}, $\bigcap_{j\neq i} F_j \neq \emptyset$ and since $\m$ has the CIP, then $\bigcap_{j\neq i} F_j$ is connected.
As each $F_j$ is a connected component of $\m_{\overline{j}}$, we know that $\bigcap_{j\neq i} F_j$ is a connected component of $\m_{i}$.
But the $\m_{i}$ is the perfect matching of $\m$ whose edges are all of $i$. That is, $\bigcap_{j\neq i} F_j$ is an edge of colour $i$. 
Thus, there are exactly two flags of $\m$ to which one can extend the chain $\{F_{-1}, F_0, \dots ,F_{i-1}, F_{i+1}, \dots, F_{n-1}, F_n\}$ and the diamond condition is satisfied.
\end{proof}

\begin{coro}
If $\m$ is a maniplex such that $\po_\m$ is faithful and strongly flag connected, then $\po_\m$ satisfies the diamond condition (and hence it is a polytope).
\end{coro}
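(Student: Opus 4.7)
The plan is to simply chain the two preceding lemmas. Since $\po_\m$ is assumed to be faithful, the hypothesis of Lemma~\ref{SFC<->CIP} is met, and its conclusion gives us the equivalence between strong flag connectivity of $\po_\m$ and $\m$ having the CIP. By hypothesis $\po_\m$ is strongly flag connected, so $\m$ has the CIP. At that point Lemma~\ref{CIP->POL} applies with no further work and yields directly that $\po_\m$ satisfies the diamond condition. That covers the main assertion of the corollary.

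For the parenthetical claim that $\po_\m$ is in fact an abstract polytope, I would collect the remaining axioms from the definition in Section~\ref{sec:poly} and point out that each is already on record. Proposition~\ref{prop:order}, together with the adjunction of the improper faces $F_{-1}$ and $F_n$ done immediately after its proof, gives a ranked poset with a unique minimum and a unique maximum; the short argument following Lemma~\ref{lemma:nonemptychains} (building $F_j$ as the component of $\m_{\overline{j}}$ containing a common flag) shows that every maximal chain has exactly one face of each rank in $\{-1,0,\ldots,n\}$; strong flag connectivity is part of the hypothesis; and the diamond condition has just been established. With all the defining properties verified, $\po_\m$ qualifies as an abstract $n$-polytope.

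There is no real obstacle here, because all the technical work was absorbed by Lemmas~\ref{SFC<->CIP} and~\ref{CIP->POL}. The only mild points to be careful with are invoking the faithfulness hypothesis explicitly when citing Lemma~\ref{SFC<->CIP}, and making clear that the ``hence it is a polytope'' clause relies on the structural remarks made earlier rather than on any new argument.
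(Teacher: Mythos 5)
Your proposal is correct and matches the paper's intent exactly: the corollary is stated without proof precisely because it follows by chaining Lemma~\ref{SFC<->CIP} (faithfulness plus strong flag connectivity gives the CIP) with Lemma~\ref{CIP->POL} (the CIP gives the diamond condition), and your accounting of the remaining polytope axioms for the parenthetical claim is accurate and appropriately cites the earlier structural remarks.
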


In other words,  if a maniplex is not polytopal then its induced poset fails to be either faithful or strongly flag connected (and it might or might not satisfy the diamond condition). The following theorem implies that the polytopality and the CIP are equivalent conditions.

\begin{theorem}\label{CIP<->Poly}
Let $\m$ be a maniplex and let $\po_\m$ be its induced poset.  $\po_\m$ is a polytope if and only if $\m$ satisfies the CIP. Moreover, in such case $\m$ is isomorphic to the flag graph of $\po_\m$.
\end{theorem}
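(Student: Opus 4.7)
The plan is to prove the biconditional in both directions, and to establish the isomorphism under the (equivalent) polytopality assumption. For the forward direction, assume $\m$ has the CIP. I would first show that $\po_\m$ is faithful: for any maximal chain $\{F_0, F_1, \ldots, F_{n-1}\}$ of proper faces, the intersection $\bigcap_{i=0}^{n-1} F_{i}$ is a subgraph of $\bigcap_{i=0}^{n-1} \m_{\bar{i}} = \m_{\emptyset}$, which contains all vertices of $\m$ but no edges. By Lemma~\ref{lemma:nonemptychains} it is nonempty, and by CIP it is connected, hence it consists of a single vertex. Lemma~\ref{SFC<->CIP} then yields strong flag connectivity of $\po_\m$, Lemma~\ref{CIP->POL} supplies the diamond condition, and the rank structure was observed immediately after Proposition~\ref{prop:order}; together, these show $\po_\m$ is a polytope.

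For the isomorphism, I would define $\phi: V(\m) \to \fl(\po_\m)$ by $\phi(v) = \{F_{-1}, F_0(v), \ldots, F_{n-1}(v), F_n\}$, where $F_i(v)$ is the unique $i$-face of $\m$ containing $v$. Surjectivity is given by Lemma~\ref{lemma:nonemptychains}, and injectivity by the faithfulness just established. For color preservation, if $(v, v^i)$ is an $i$-edge of $\m$, then $F_j(v) = F_j(v^i)$ for all $j \neq i$ (since the $i$-edge lies in each $\m_{\bar{j}}$, $j \neq i$), and $F_i(v) \neq F_i(v^i)$, for otherwise $v$ and $v^i$ would be two distinct elements of the one-point set $\bigcap \phi(v)$. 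Thus $\phi(v)$ and $\phi(v^i)$ are $i$-adjacent flags in $\po_\m$, and $\phi$ is a color-preserving graph isomorphism $\m \to \G_{\po_\m}$.

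For the converse, assume $\po_\m$ is a polytope. Since polytopes are strongly flag connected by definition, Lemma~\ref{SFC<->CIP} reduces the task to showing that $\po_\m$ is faithful. I would proceed by induction on the rank $n$, with the base $n \leq 2$ being immediate. For the inductive step, I would view each $0$-face $F_0$ of $\m$ as an $(n-1)$-maniplex with colors from $\{1, \ldots, n-1\}$, identify its induced poset $\po_{F_0}$ with the section $F_n/F_0$ of $\po_\m$ (itself a polytope of rank $n-1$), and apply the inductive hypothesis to conclude that $\po_{F_0}$ is faithful; in particular $|V(F_0)| = |\fl(F_n/F_0)|$. Summing over all $0$-faces and using that each flag of $\po_\m$ contains a unique $0$-face gives $|V(\m)| = |\fl(\po_\m)|$, which together with the surjectivity of $\phi$ forces $\po_\m$ to be faithful. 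The main obstacle is the identification $\po_{F_0} \cong F_n/F_0$: it requires showing that every face of $\po_\m$ incident to $F_0$ meets $F_0$ in a single connected subgraph --- a restricted form of CIP that must be handled within the induction, likely via the product decomposition of faces from Lemma~\ref{FacesofM}.
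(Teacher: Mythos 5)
Your forward direction (CIP implies polytope, together with the isomorphism) is essentially the paper's own argument: faithfulness via ``nonempty $+$ connected $+$ edgeless $\Rightarrow$ a single vertex'', then Lemmas~\ref{SFC<->CIP} and~\ref{CIP->POL}, and your map $\phi$ is exactly the paper's bijection $\beta$, with the colour-preservation check done the same way. That half is correct.

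The converse is where the gaps are, and you have correctly located the main one yourself. First, the global counting step ($|V(\m)|=|\fl(\po_\m)|$ plus surjectivity of $\phi$ forces injectivity) is only valid for finite maniplexes, and nothing in the paper restricts to the finite case (infinite $2$-maniplexes are explicitly allowed). The fix is to argue locally, as the paper does: for a \emph{fixed} maximal chain $\{F_0,\dots,F_{n-1}\}$, the traces $F_i\cap F_{n-1}$ (for $i\le n-2$) form a maximal chain of the induced poset of the $(n-1)$-maniplex $F_{n-1}$, which is faithful by induction, so $\bigcap_{i=0}^{n-1}F_i=\bigcap_{i=0}^{n-2}(F_i\cap F_{n-1})$ is a single vertex; no cardinality comparison is needed. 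Second, and more seriously, the identification $\po_{F_0}\cong F_n/F_0$ (or $\po_{F_{n-1}}\cong F_{n-1}/F_{-1}$ in the paper's facet-based version) is not something Lemma~\ref{FacesofM} can deliver: the issue is precisely whether a face $G_i$ of $\m$ that meets the sub-maniplex meets it in a \emph{single} face of that sub-maniplex --- a restricted CIP --- and the product decomposition of $G_i$ says nothing about how $G_i$ sits inside $F_0$. The paper closes this by invoking the hypothesis that $\po_\m$ is a polytope: the diamond condition forces the $(n-1)$-adjacent flag $v^{n-1}$ of each $v\in F_{n-1}$ to lie outside $F_{n-1}$, from which one deduces that $G_i\cap F_{n-1}$ is a single $i$-face of the maniplex $F_{n-1}$ and hence that the faces of $\po_{F_{n-1}}$ biject with those of the section. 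Until you supply an argument of this kind, the inductive step --- and with it the whole converse --- is incomplete.
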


\begin{proof}
Suppose that $\po_\m$ is a polytope. If $\po_\m$ is faithful, by Lemma~\ref{SFC<->CIP}, we are done. The poset $\po_\m$ is faithful if for every maximal chain $\{F_0, F_1, \dots, F_{n-1}\}$ of $\po_\m$, the intersection $\bigcap_{i=0}^{n-1} F_{i}$ consists of a unique vertex of $\m$. In such case, the bijection between the maximal chains of $\po_\m$ and the vertices of $\m$ is given in a natural way.

We now show that if $\po_\m$ is a polytope, then for every maximal chain $\{F_0, F_1, \dots, F_{n-1}\}$ of $\po_\m$, we have that $|\bigcap_{i=0}^{n-1} F_{i}| = 1.$
The proof is done by induction over $n$, the rank of the polytope $\po_\m$. For $n=0,1,2$, the proposition follows immediately, as the faces $F_i$ are simply vertices or edges. 
Suppose that the proposition is true for every rank less than $n$, and that $\po_\m$ has rank $n$. 
Let $\{F_0, F_1, \dots, F_{n-1}\}$ be a maximal chain of $\po_\m$. Then $\bigcap_{i=0}^{n-1} F_{i}\neq \emptyset$

The face $F_{n-1}$ is a connected component of $\m_{\overline{n-1}}$, which implies that it is a maniplex of its own right. Thus, $F_{n-1}$ induces a partial order $\po_{n-1}:=\po_{F_{n-1}}$. We shall show that $\po_{n-1}$ is isomorphic to the section $F_{n-1} / F_{-1}$ of $\po_\m$.

For each $v \in F_{n-1}$, consider $\Phi_v$ the corresponding maximal chain of $\po_\m$. By the diamond condition of $\po_\m$, there is a flag $\Phi_v^{n-1}$ that coincides with $\Phi_v$ in all its faces of ranks $0, \dots, n-2$, but has some $F'_{n-1}\neq F_{n-1}$ as its $(n-1)$-face. Then, $v^{n-1}$ , the $(n-1)$-adjacent flag to $v$ is in $F'_{n-1}$, and hence it does not belong to $F_{n-1}$. 
This implies that, if $G_i$ is an $i$-face of $\m$ such that $G_i \cap F_{n-1}\neq \emptyset$  (i.e $G_i \leq F_{n-1}$ in $\po_\m$), then $G_i \cap F_{n-1}$ is an $i$-face of the $(n-1)$ maniplex $F_{n-1}$ and hence it is an $i$-face of the induced order $\po_{n-1}$.
In other words, the $i$-faces of the section $F_{n-1} / F_{-1}$ of $\po_\m$ can be thought as $i$-faces of $\po_{n-1}$. 
Moreover, every $i$-face of $\po_{n-1}$ comes from an $i$-face of $F_{n-1} / F_{-1}$ of $\po_\m$. In fact, if $G$ is an $i$-face of $\po_{n-1}$, then $G$ is a connected component of $(F_{n-1})_{\bar{i}}$. Take $v \in G$ and consider $\hat{G}$, the connected component of $\m_{\bar{i}}$ containing $v$.
Then $\hat{G}$ is an $i$-face of $\m$ and since $v\in G \subset F_{n-1}$, then $\hat{G}\cap F_{n-1} \neq \emptyset$, implying that $\hat{G} \leq F_{n-1}$ in $\po_\m$ and hence $G=\hat{G}\cap F_{n-1}$. That is, $G$ comes from the $i$-face $\hat{G}$ of $F_{n-1} / F_{-1}$.
This gives a bijection between the faces of $\po_{n-1}$ and those of $F_{n-1} / F_{-1}$. The fact that they are isomorphic as posets is now straightforward.

Then, $\po_{n-1}$ is a polytope of rank $n-1$. By induction hypothesis, this implies that $\po_{n-1}$ is faithful. 
Then, if for each $i=0, \dots, n-2$ we set $G_i:=F_i\cap F_{n-1}$, we have that $G_i$ is an $i$-face of the $(n-1)$-maniplex $F_{n-1}$, and $G_i$ is an $i$-face of $\po_{n-1}$. Thus,  $\{G_0, G_1, \dots G_{n-2}\}$ is a maximal chain of $\po_{n-1}$. 
Since $\po_{n-1}$ is faithful, then $|\bigcap^{n-2}_{i=0} G_i| = 1$. 
But $\bigcap^{n-2}_{i=0} G_i =\bigcap^{n-2}_{i=0} (F_i\cap F_{n-1})=\bigcap_{i=0}^{n-1} F_{i}$,
 implying that $|\bigcap_{i=0}^{n-1} F_{i}|=1$. Therefore $\po_\m$ is faithful and thus $\m$ has the CIP.

We now assume that $\m$ satisfies the CIP. 
Then $\po_\m$ is a poset with rank function that has a minimal and a maximal element. By Lemma~\ref{CIP->POL}, $\po_\m$ also satisfies the diamond condition.
If $\po_\m$ is faithful, then by Lemma~\ref{SFC<->CIP} we have that $\po_\m$ is a polytope.

In what follows, we show that there exists a bijection $\beta$ between the flags of $\m$ and the maximal chains $\fl(\po_\m)$ of $\po_\m$.
For each $v \in \m$ and $i \in [n]$, we let $F_i^v$ be the connected component of $\m_{\overline{i}}$ containing $v$. Clearly,  each  $F_i^v$ is well-defined and unique.
Let $\beta: \m \to \fl(\po_\m)$ be such that
\begin{eqnarray*}
\beta:
 v \mapsto \{F_{-1}, F^v_0, F^v_1, \dots, F^v_{n-1}, F_n\} .
\end{eqnarray*}
Then $\beta$ is a well defined function from $\m$ to $\fl(\po_\m)$. 
By Lemma~\ref{lemma:nonemptychains}, $\beta$ is onto.
Now suppose that $v,u\in\m$ are such that $\beta(v)=\beta(u)= \{F_{-1}, F_0, F_1, \dots, F_{n-1}, F_n\}$. 
This means that both $v$ and $u$ are elements of each $F_i$, $i\in[n]$.
Hence, $v,u \in \bigcap_{i=0}^{n-1} F_{i}$.
Note that on one hand, since $\m$ has the CIP,  $\bigcap_{i=0}^{n-1} F_{i}$ is connected. 
On the other hand,
as each $F_i$ fails to have edges of colour $i$,  $\bigcap_{i=0}^{n-1} F_{i}$ has no edges, so it is a set of disjoin vertices. 
Therefore $\bigcap_{i=0}^{n-1} F_{i}$ consists of exactly one vertex of $\m$, implying that $v=u$ and thus $\beta$ is one to one.

Hence, if $\m$ satisfies the CIP, then $\po_\m$ is faithful and therefore it is a polytope. Moreover,
 $\beta$ can be regarded as a bijection between the vertices of $\m$ and the vertices of $\G_{\po_\m}$. 
To see that it is indeed a isomorphism of $\m$ and $\G_{\po_\m}$ (as coloured graphs) one only needs to note that $i$-edges of $\G_{\po_\m}$ correspond to $i$-adjacent flags of $\po_\m$, and these in turn (as pointed out in the proof of Lemma~\ref{CIP->POL}) correspond to $i$-edges of $\m$. 
Theus, the theorem follows.
\end{proof}

A corollary of the above theorem is that a maniplex is polytopal if and only if it satisfies the CIP.

\section{The path intersection property}\label{sec:PIP}

In the previous section we gave necessary and sufficient conditions on a maniplex to be polytopal. 
Such conditions were given in terms of some connected components of the maniplex. 
In this section we show that the CIP is equivalent to an intersection property of the paths of the maniplex. 
By doing this, we will have necessary and sufficient conditions for the polytopality in terms of intersections of coloured paths of the maniplex.

We start by looking into common ways to manipulate a given path of the maniplex without changing its colours. 
We use (several times) that  the 2-factors of colours $i$ and $j$ are $4$-cycles, whenever $|i-j|>1$.
For instance, if we have a path of colours $1, 4, 1$, we can replace it by an edge of colour $4$, or we can assume that whenever there are two consecutive edges on a path whose colours differ in more than one, then the smallest (or the greatest) colour appears first.

It shall prove particularly useful to be able to manipulate paths within a face of the maniplex. 
Suppose that $u,v \in F_i$, where $F_i$ is an $i$-face of a maniplex $\m$.
Of course, this means that there is a path from $u$ to $v$ whose edges are not of colour $i$.
But, can we find a nice path that connects $u$ and $v$?
This depends on what one understand by ``nice'', but we shall see that we can find a path from $u$ to $v$ that passes through a vertex $w$ satisfying the property that the path from $u$ to $w$ has edges of colours all smaller that $i$, while the one from $w$ to $v$ has edges of colours all greater than $i$.

To see this, consider $p$, a path from $u$ to $v$ that is contained in $F_i$.
As pointed out before, with out loss of generality, we can assume that, in $p$, if there are two consecutive edges of colours $j$ and $k$ with $|j-k|>1$, then the edge of smaller colour appears first.
Let $u=v_0, v_1, \dots v_m=v$ be the vertices of $p$ and let $e_j$ be the edge of $p$ connecting the vertices $v_{j-1}$ and $v_j$. 
Let $c(e_j)$ denote the colour of the edge $e_j$.
Note that, by our assumptions, if for some $j=2,\dots,m-1$ we have that $c(e_{j-1})=c(e_{j+1})$, then $c(e_j)=c(e_{j-1})\pm 1$.
Moreover, if $c(e_j)>i$ for some $j>0$, then all the edges $e_{j+1}, \dots, e_{m}$ have colours also greater than $i$.
Indeed if this was not the case, suppose that $e_{j+k}$ is the first edge of $p$ after $e_j$ that has colour smaller than $i$. 
But then $c(e_{j+k-1}) >i$ and $c(e_{j+k})<i$ imply that $|c(e_{j+k-1})-c(e_{j+k})|>1$, and by our assumptions, $e_{j+l}$ appears before $e_{j+l-1}$ in $p$, which is a contradiction.

We have therefore stablished the following lemma.

\begin{lemma}\label{orderedpaths}
Let $\m$ be a maniplex and $u,v \in F_i$, where $F_i$ is an $i$-face of $\m$.
Then, there exists a vertex $w\in F_i$ satisfying the property that there is a path from $u$ to $w$ having edges of colours all smaller that $i$, while there is a path from $w$ to $v$ having edges of colours all greater than $i$.
\end{lemma}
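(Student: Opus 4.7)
The plan is to take any path from $u$ to $v$ inside $F_i$ and repeatedly ``sort'' its edge-colour sequence, using the 4-cycle property for colours whose difference exceeds $1$, until it reaches a normal form in which every edge of colour less than $i$ precedes every edge of colour greater than $i$. The transition vertex between these two regimes will be the required $w$.

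First, since $F_i$ is a connected component of $\m_{\bar{i}}$, there is a path $p \colon u = v_0, v_1, \ldots, v_m = v$ inside $F_i$; all of its edge colours lie in $\n \setminus \{i\}$. Call two consecutive edges $e_r, e_{r+1}$, with colours $c(e_r)=j$ and $c(e_{r+1})=k$, \emph{out of order} when $j>k$ and $|j-k|>1$. When this happens, the $2$-factor of $\m$ in colours $\{j,k\}$ containing $v_r$ is a $4$-cycle $v_{r-1}, v_r, v_{r+1}, v_r'$; replacing $v_r$ by $v_r'$ yields a path of the same length from $u$ to $v$ whose $r$-th and $(r{+}1)$-st edges now have colours $k,j$ in that order. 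Since the $4$-cycle avoids colour $i$, the new vertex $v_r'$ still lies in $F_i$, so the modified path remains inside $F_i$.

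Next, I would show that iterating this local swap terminates. The natural candidate is the linear potential $\Phi(p) := \sum_{r=1}^{m} r\, c(e_r)$: each swap strictly increases $\Phi$ by $j-k > 0$, and $\Phi$ is bounded above (e.g.\ by $(n-1)\binom{m+1}{2}$). Hence after finitely many swaps we reach a path $p^\ast$ with no out-of-order consecutive pair. Now suppose that on $p^\ast$ some edge of colour greater than $i$ is eventually followed by an edge of colour less than $i$, and let $e_s, e_{s+1}$ be the first such pair, so $c(e_s) > i > c(e_{s+1})$. Since neither colour equals $i$, we have $|c(e_s) - c(e_{s+1})| > 1$ together with $c(e_s) > c(e_{s+1})$, contradicting the normal form.

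Therefore on $p^\ast$ every edge of colour less than $i$ precedes every edge of colour greater than $i$. Letting $s$ be the smallest index with $c(e_s) > i$ (and setting $s = m+1$ if no such index exists), define $w := v_{s-1}$; the initial subpath from $u$ to $w$ uses only colours less than $i$, and the remaining subpath from $w$ to $v$ uses only colours greater than $i$, as required. The main subtlety is the termination of the sorting procedure: a single swap can create new out-of-order pairs elsewhere on the path, so a naive inversion count need not decrease monotonically; the linear potential $\Phi$ handles this cleanly because it is sensitive to the \emph{position} as well as the colour of each edge.
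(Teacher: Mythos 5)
Your proof is correct and follows essentially the same route as the paper: take a path from $u$ to $v$ inside $F_i$, use the $4$-cycles of $2$-factors in colours $\{j,k\}$ with $|j-k|>1$ to sort the colour sequence so that smaller colours precede larger ones, and then observe that, since colour $i$ never occurs, any colour $>i$ immediately followed by a colour $<i$ would be a swappable pair, so in the normal form all colours $<i$ precede all colours $>i$. You are in fact more careful than the paper about why the sorting terminates (the paper just asserts the normal form ``without loss of generality''), though your closing caveat is unnecessary: each swap is an adjacent transposition of the colour sequence that leaves all other positions untouched, so the plain inversion count of the colour sequence does decrease by exactly one per swap and would have served as well as your weighted potential.
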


In light of Lemma~\ref{FacesofM}, when $F_i$ is thought as a graph product, we can write $u=(u_1,u_2), v=(v_1,v_2) \in F_i$. Then the $w$ of the above lemma is simply the vertex $w=(v_1, u_2)\in F_i$.

Observe that, if we start by assuming that there is a $u-v$ path in $F_i$ that does not have colours in some set $A\subset\n$, then the $u-w$ and $w-v$ paths given in the above lemma can be taken in such a way that do not have colours in $A$. This is because when we ``re-order'' a path so that  adjacent edges have ether consecutive or increasing colours, we never change the colours of the edges of the original path. Therefore, the next result follows from Lemma~\ref{orderedpaths}.

\begin{lemma}\label{reallyorderedpaths}
Let $\m$ be a maniplex and let $u,v \in F$, where $F$ is a connected component of $\M_{\overline{\ i_1,  \dots, i_k\}}}$, for some $-1=i_0 < i_1<i_2<\dots< i_k\leq n-1$. Set $i_0:=-1$ and $i_{k+1}:=n$. Then there exists $u=w_0, w_1, \dots, w_{k+1}=v \in F$ satisfying that for each $j=0,\dots k$, there is a path from $w_j$ to $w_{j+1}$ having edges of colours greater than $i_j$ but smaller than $i_{j+1}$.
\end{lemma}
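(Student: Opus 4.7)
The plan is to prove the lemma by iterating Lemma~\ref{orderedpaths}, together with the remark immediately preceding the present statement (which asserts that the two pieces produced by Lemma~\ref{orderedpaths} can be chosen to avoid any color set that the original path already avoids). I will construct the intermediate vertices $w_1, w_2, \ldots, w_k$ one at a time, working upwards through the indices $i_1 < i_2 < \cdots < i_k$, while maintaining precise control over the colors used by the ``remaining segment'' from the current vertex $w_{j-1}$ to $v$.

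At step $j$ (for $j = 1, \ldots, k$, with $w_0 := u$), I will have in hand a path from $w_{j-1}$ to $v$ whose colors lie in $\{i_{j-1}+1, \ldots, n-1\} \setminus \{i_j, i_{j+1}, \ldots, i_k\}$. In particular, since this path does not use color $i_j$, the vertices $w_{j-1}$ and $v$ share a common $i_j$-face of $\m$. I apply Lemma~\ref{orderedpaths} to this $i_j$-face, invoking the remark with the forbidden set $A_j := \{0, 1, \ldots, i_{j-1}\} \cup \{i_{j+1}, \ldots, i_k\}$, which the current remaining path avoids by construction. This yields a vertex $w_j$ together with a path $w_{j-1} \to w_j$ using colors $< i_j$ and avoiding $A_j$, and a path $w_j \to v$ using colors $> i_j$ and avoiding $A_j$.

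A direct set-theoretic check shows that the first of these paths uses colors precisely in $\{i_{j-1}+1, \ldots, i_j-1\}$, which is the requirement for the $j$-th piece, while the second uses colors in $\{i_j+1, \ldots, n-1\} \setminus \{i_{j+1}, \ldots, i_k\}$, which is exactly the invariant needed to enter step $j+1$. After step $k$ the remaining segment from $w_k$ to $v$ uses colors in $\{i_k+1, \ldots, n-1\} = \{i_k+1, \ldots, i_{k+1}-1\}$, so I set $w_{k+1} := v$ and the construction is complete. Finally, all of $w_0, w_1, \ldots, w_{k+1}$ lie in $F$ because their concatenated connecting paths use no color in $\{i_1, \ldots, i_k\}$, so the whole walk stays inside the component $F$ of $\m_{\overline{\{i_1,\ldots,i_k\}}}$.

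The only genuine obstacle is the bookkeeping: at each step one must verify that the forbidden set $A_j$ is actually avoided by the path currently available, so that the remark applies, and then confirm that the two output color ranges match the intervals $\{i_{j-1}+1,\ldots,i_j-1\}$ and $\{i_j+1,\ldots,n-1\}\setminus\{i_{j+1},\ldots,i_k\}$. Both checks are routine once the invariant on the remaining segment is chosen correctly, and this invariant is essentially forced by what the iteration needs in order to close.
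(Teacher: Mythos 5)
Your proposal is correct and follows exactly the route the paper intends: the paper gives no written proof beyond asserting that the lemma follows from Lemma~\ref{orderedpaths} together with the preceding observation that the re-ordering preserves any avoided colour set, and your iteration with the invariant on the remaining segment is precisely the careful elaboration of that one-line argument. The bookkeeping you describe checks out at every step, including the base case with $i_0=-1$ and the final segment with $i_{k+1}=n$.
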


We now use the strong flag connectivity of a polytope $\po$ to show a property about intersections of coloured paths of a flag graph of a polytope. 

\begin{lemma}\label{POL->WPIP}
Let $\po$ be a polytope and $\G_\po$ be its flag graph. Given $v_\Phi$ and $v_\Psi$  two vertices of $\G_\po$, if there are two paths from $v_\Phi$ to $v_\Psi$, one using edges of colours  in a set $A$ and the other one using edges of colours in a set $B$, then there is a path from $v_\Phi$ to $v_\Psi$ using colours only from $A \cap B$.
\end{lemma}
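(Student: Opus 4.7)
My plan is to use the correspondence between $A$-coloured paths in $\G_\po$ and agreement of flags on faces whose rank lies outside $A$, and then invoke strong flag connectivity in the direction that is usually harder to appreciate.

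First I would observe the easy direction. Any edge of colour $j$ in $\G_\po$ joins a flag $\Phi$ with its $j$-adjacent flag $\Phi^j$, which coincides with $\Phi$ in every rank other than $j$. Consequently, if $v_\Phi$ and $v_\Psi$ are connected by a path whose edge colours all lie in a set $A\subseteq[n]$, then along the path the $i$-face never changes for any $i\in\overline{A}$; hence $(\Phi)_i=(\Psi)_i$ for all $i\in\overline{A}$. Applying this to both hypotheses, the set $I:=\{\,i\in[n]\,:\,(\Phi)_i=(\Psi)_i\,\}$ contains $\overline{A}\cup\overline{B}=\overline{A\cap B}$.

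Next I would feed this back into the strong flag connectivity of $\po$. By definition, there exists a sequence of adjacent flags $\Phi=\Phi_0,\Phi_1,\dots,\Phi_k=\Psi$ with $\Phi\cap\Psi\subseteq\Phi_\ell$ for every $0\le\ell\le k$. Each adjacency $\Phi_\ell\to\Phi_{\ell+1}$ is a $j_\ell$-adjacency for some $j_\ell\in[n]$, and since both $\Phi_\ell$ and $\Phi_{\ell+1}$ contain every common face of $\Phi$ and $\Psi$, the index $j_\ell$ must correspond to a rank on which $\Phi$ and $\Psi$ differ, i.e.\ $j_\ell\in\overline{I}$. But from the previous step $\overline{I}\subseteq A\cap B$, so every $j_\ell$ lies in $A\cap B$. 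Translating back to $\G_\po$, this sequence of adjacencies is exactly a path from $v_\Phi$ to $v_\Psi$ whose edges have colours in $A\cap B$.

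I do not expect a real obstacle here; the content of the lemma is essentially a restatement of strong flag connectivity once one recognises that an $A$-coloured walk is precisely a witness that the endpoints share all faces of ranks in $\overline{A}$. The only point to be careful about is the quick verification that $j_\ell\in\overline{I}$: the common faces of $\Phi$ and $\Psi$, which include the $i$-face for every $i\in I$, must belong to both $\Phi_\ell$ and $\Phi_{\ell+1}$, so a $j_\ell$-adjacency cannot alter any of them, forcing $j_\ell\notin I$.
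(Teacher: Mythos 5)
Your proposal is correct and follows essentially the same route as the paper: deduce from the two coloured paths that $\Phi$ and $\Psi$ agree on all ranks in $\overline{A}\cup\overline{B}=\overline{A\cap B}$, then invoke strong flag connectivity to obtain a sequence of adjacent flags preserving $\Phi\cap\Psi$, whose adjacencies necessarily have colours in $A\cap B$. Your explicit verification that each $j_\ell\notin I$ is a slightly more careful spelling-out of a step the paper leaves implicit, but the argument is the same.
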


\begin{proof}
Each of the two paths corresponds to a sequence of adjacent flags starting in $\Phi$ and finishing in $\Psi$.
One path uses colours only in $A$, which means that the adjacencies of its corresponding sequence are taken all in $ A$. Hence, $\Phi_a=\Psi_a$ for every $a\in \overline{A}$. 
Similarly, $\Phi_b=\Psi_b$ for every $b\in \overline{B}$.
This means that  $\Phi_i=\Psi_i$ for all $i \in \overline{A} \cup \overline{B}=\overline{A\cap B}$, so $\Phi_i \in \Phi\cap\Psi$, for every $\overline{A\cap B}$.

As $\po$ is strongly flag connected, there is a sequence of 
adjacent flags from $\Phi$ to $\Psi$ all having the elements of $\Phi\cap\Psi$.
Hence, there is a path from $v_\Phi$ to $v_\Psi$ with edges using colours only in $A\cap B$.
\end{proof}
\begin{figure}[htbp]
\begin{center}
\includegraphics[width=7cm]{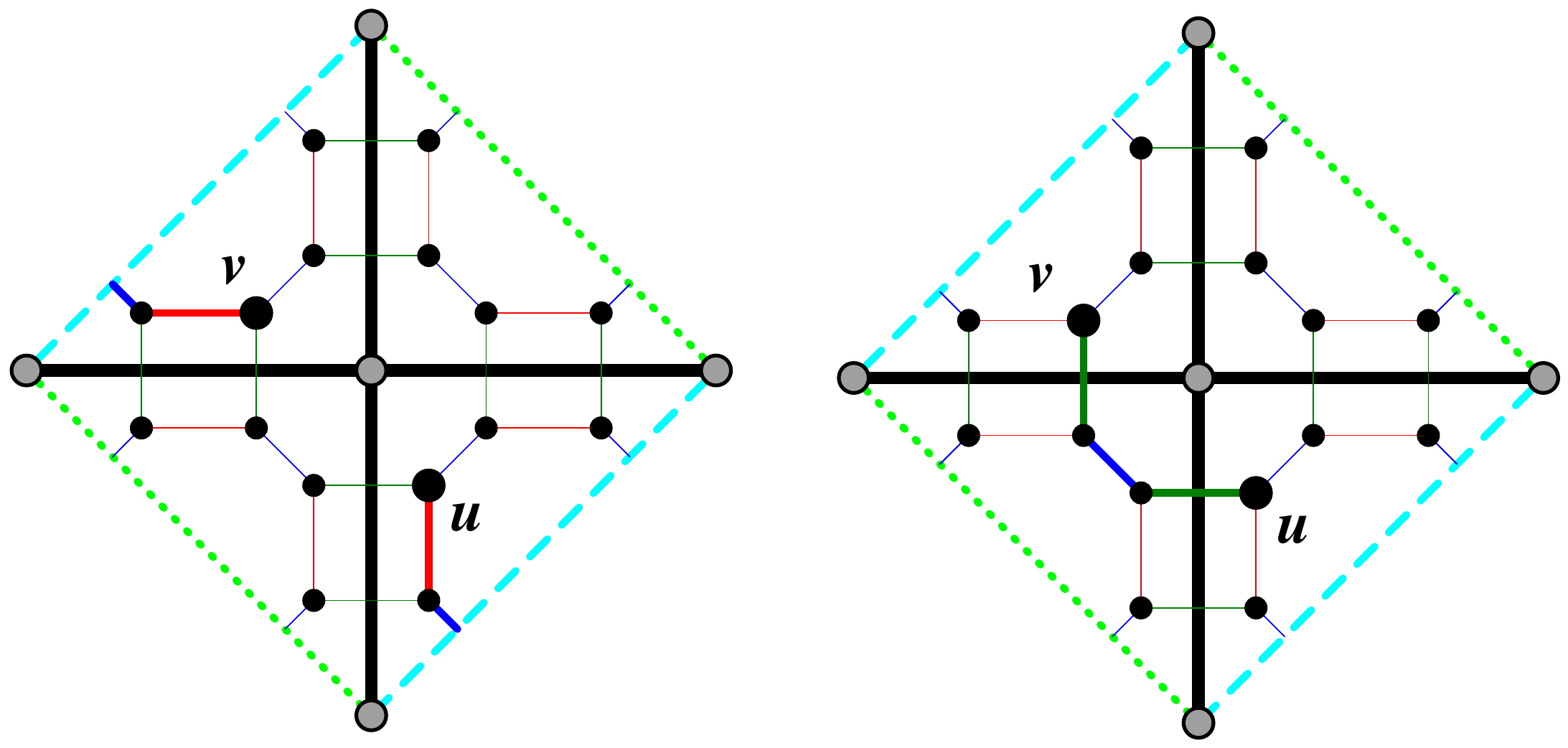}
\caption{The vertices $u$ and $v$ are joined by a path of colours $0$ and $1$ and a path of colours $1$ and $2$, but there is no path of colour $1$ between them.}
\label{PIP{4,4}(1,1)}
\end{center}
\end{figure}

This motivates us to give the following definition.

\begin{definition} 
Let $\m$ be a $n$-maniplex. We say that $\m$ has the {\em strong path intersection property (or SPIP)} if whenever there are flags $v,u\in\m$ and sets $A,B\subset\n$ such that there are two $v-u$-pahts in $\m$, one with edges of colours in $A$ and the other one with colours in $B$, then there is a $v-u$-path in $\m$ with edges of colours in $A\cap B$.
\end{definition}

Similarly to the CIP, not every maniplex has the SPIP. Examples of $3$-maniplexes that fail to have the SPIP are the maps on the torus $\{4,4\}_{(1,a)}$, with $a=0,1$ (see Figures~\ref{PIP{4,4}(1,1)} and~\ref{PIP{4,4}(1,0)}).

\begin{figure}[htbp]
\begin{center}
\includegraphics[width=9cm]{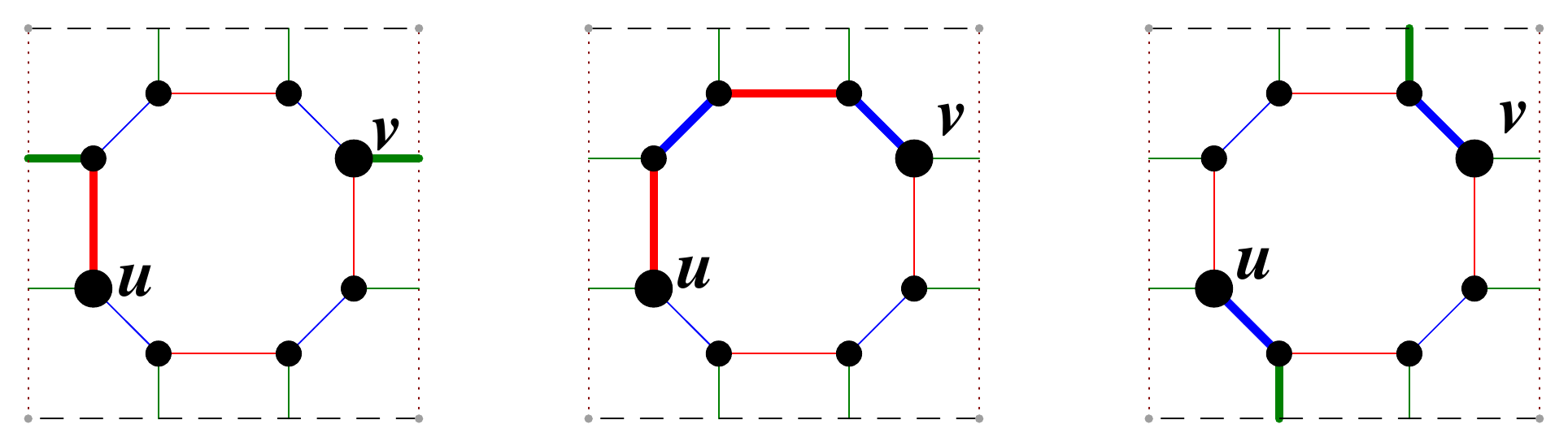}
\caption{The vertices $u$ and $v$ are joined by a path of colours $i$ and $j$, for $i\neq j$, $i,j\in\{0,1,2\}$, but no path of a single colour between them.}
\label{PIP{4,4}(1,0)}
\end{center}
\end{figure}

An example of a $4$-maniplex that fails to have the SPIP is the tessellation of the $3$-torus described in the previous section. Let $\Phi$ be the flag that, in Figure~\ref{PIP4}, consists of vertex $A$, the edge $AB$, the shadowed square on the left picture of the figure and the octahedron on the upper right corner. Then let $\Psi$ be the flag that also consists of vertex $A$, but now the edge $AF$, the shadowed square on the right picture of the figure and also the octahedron on the upper right corner. 
It is not difficult to see that one can go from $\Phi$ to $\Psi$ with a path of colours $0$, $1$ and $2$, or with a path of colours $1$, $2$ and $3$. However, a path of colours $1$ and $2$ starting at $\Phi$ must have finish at a flag whose edge is one of the edges $AB$, $AC$, $AD$ or $AE$.

\begin{figure}[htbp]
\begin{center}
\includegraphics[width=5cm]{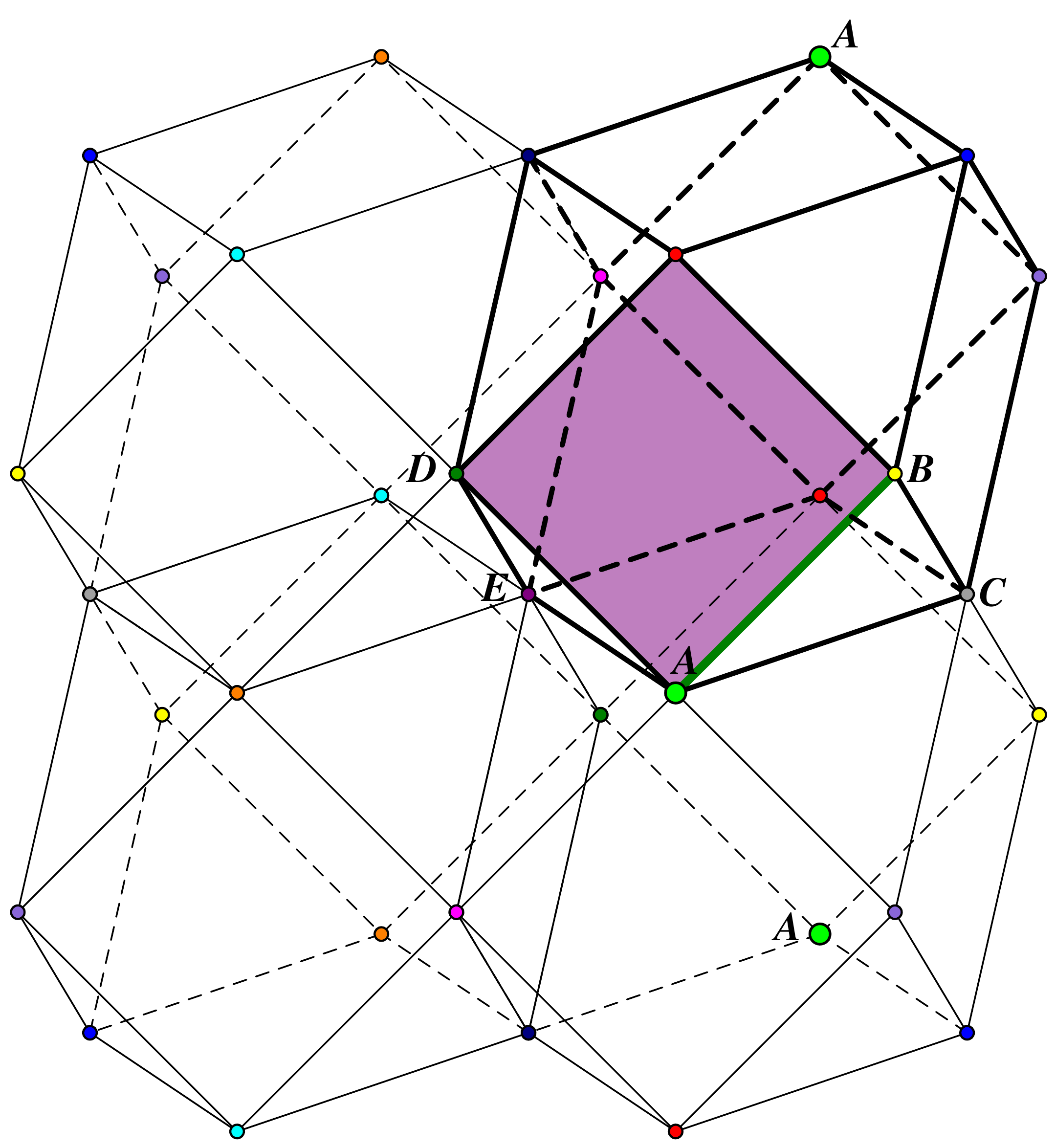} \ \ \ \ 
\includegraphics[width=5cm]{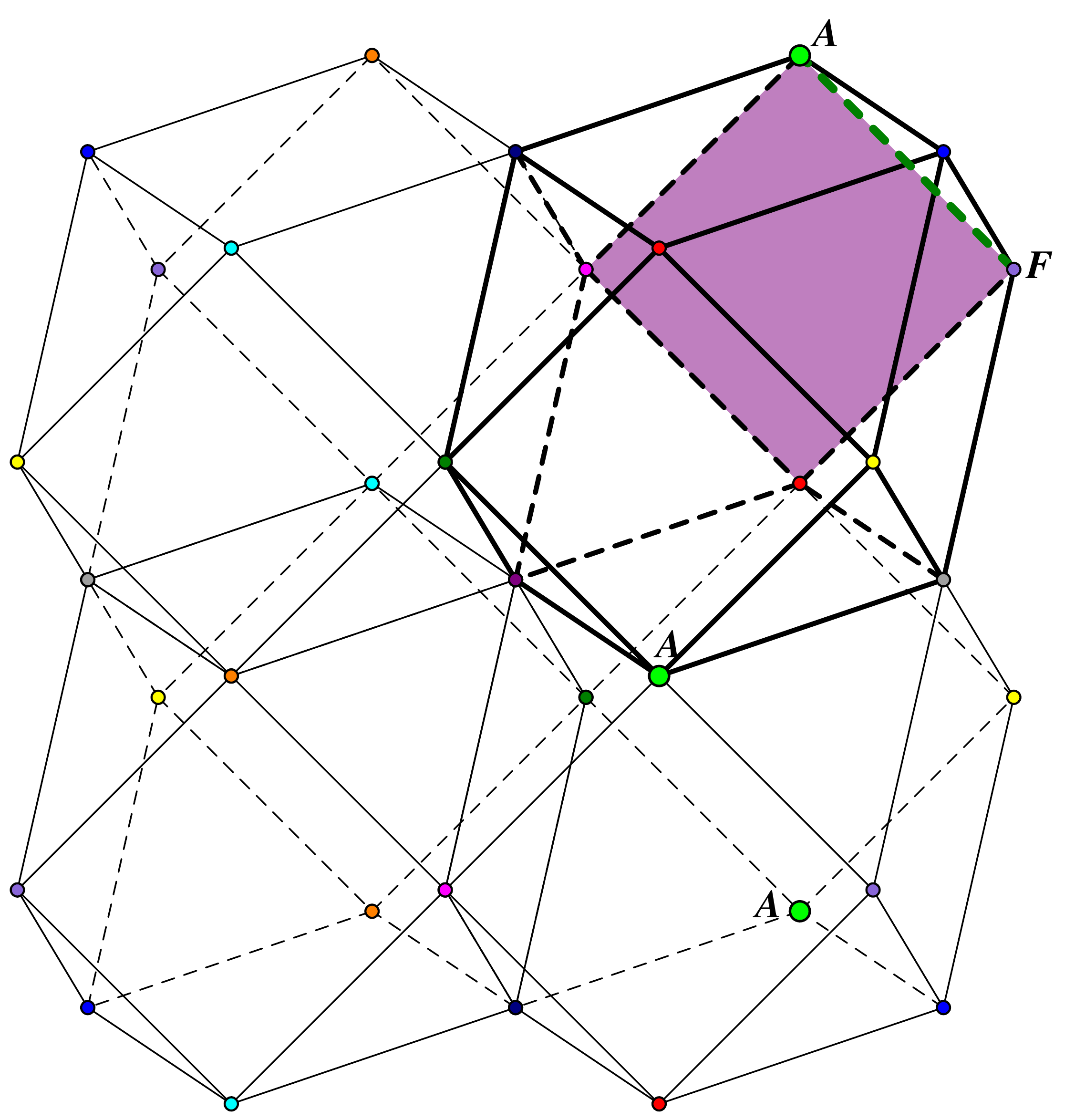}
\caption{A tessellation of the $3$-tours with octahedra and cuboctahedra. Its induced maniplex does not satisfy the SPIP.}
\label{PIP4}
\end{center}
\end{figure}

Note that whenever a maniplex $\m$ satisfies the SPIP, it satisfies a weaker version of it, namely, when $A=\{0,1,\dots, j-1\}$ and $B=\{i+1, \dots, n-1\}$, for some $0\leq i < j \leq n-1$. This motivate us to give the following definition.

\begin{definition}
A maniplex $\m$ has the {\em weak path intersection property (WPIP)} if for every two different flags $u,v$ of $\mathcal{M}$ and every $i,j \in [n]$ with $i<j$, it is satisfied that every time that there is a path from $u$ to $v$ that uses colours only in $[n]_{>i} := \{k\in [n] : k > i\}$ and another one that uses colours only in $[n]_{<j} := \{ k \in [n] : k < j\}$, then there exists a path from $u$ to $v$ that uses colours only in $[n]_{>i}\cap [n]_{<j}$.
\end{definition}

Again, the maps on the torus $\{4,4\}_{(1,a)}$, with $a=0,1$ fail to have the WPIP. 
On the other hand, every polytopal maniplex has the WPIP (as it has the SPIP). 
Thus, if a maniplex has the CIP, then it has the WPIP. 
In the following theorem we show that these two conditions are in fact equivalent. 
As a corollary of it, we have that a maniplex has the SPIP if and only if it has the WPIP.

\begin{theorem}
A maniplex $\m$ has the CIP if and only if it has the WPIP.
\end{theorem}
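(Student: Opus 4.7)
The plan is to prove each direction separately. The forward direction, CIP $\Rightarrow$ WPIP, is essentially the content of Lemma~\ref{POL->WPIP} adapted to the maniplex setting: suppose $\m$ has the CIP and let $u, v$ be flags connected by a path using colours in $[n]_{>i}$ and by a path using colours in $[n]_{<j}$ for some $i < j$. Since a path using colours greater than $i$ uses no edge of colour $m \le i$, it stays inside the $m$-face of $u$, placing $v$ in that $m$-face for every $m \le i$; analogously, $u$ and $v$ share the same $m$-face for every $m \ge j$. Together these $m$-faces form a chain of $\po_\m$ whose intersection, by CIP, is connected and contains both $u$ and $v$. Any $u$-$v$ path inside this intersection must use colours in $[n]_{>i}\cap[n]_{<j}$, which is exactly what WPIP asks for.

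For WPIP $\Rightarrow$ CIP, take a chain $\{G_1, G_2, \dots, G_k\}$ of $\po_\m$ where $G_m$ is an $i_m$-face with $i_1 < i_2 < \dots < i_k$, and two vertices $u, v \in \bigcap_{m=1}^k G_m$. The aim is to produce a $u$-$v$ path inside $\bigcap_m G_m$ piece by piece. For each $m$, Lemma~\ref{orderedpaths} applied inside $G_m$ furnishes a vertex $w_m \in G_m$ together with a $u$-$w_m$ path using only colours smaller than $i_m$ and a $w_m$-$v$ path using only colours greater than $i_m$. A short check shows $w_m \in \bigcap_{m'} G_{m'}$: for $m' \ge m$ the $u$-$w_m$ path avoids colour $i_{m'}$ and therefore keeps $w_m$ in the same $i_{m'}$-face as $u$; for $m' \le m$ the $w_m$-$v$ path does the symmetric job. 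Moreover, the path $u \to w_1$ uses only colours less than $i_1$ and hence avoids every $i_{m'}$, so it already lies inside $\bigcap_m G_m$; similarly $w_k \to v$ lies inside $\bigcap_m G_m$.

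The crucial step is connecting $w_m$ to $w_{m+1}$ inside $\bigcap_m G_m$ for $m = 1, \dots, k-1$. Concatenating $w_m \to u \to w_{m+1}$ yields a $w_m$-$w_{m+1}$ path whose colours are all less than $i_{m+1}$, while $w_m \to v \to w_{m+1}$ yields one whose colours are all greater than $i_m$. Applying WPIP with the pair $(i_m, i_{m+1})$ then produces a $w_m$-$w_{m+1}$ path using colours strictly between $i_m$ and $i_{m+1}$. Because $i_m$ and $i_{m+1}$ are consecutive in the chain, this colour set is disjoint from $\{i_1, \dots, i_k\}$, so the path lies in $\bigcap_m G_m$. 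Concatenating $u \to w_1 \to w_2 \to \dots \to w_k \to v$ completes the proof that $\bigcap_m G_m$ is connected, hence that $\m$ has the CIP.

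The conceptual obstacle is recognizing that WPIP, despite involving only the very restricted colour sets $[n]_{>i}$ and $[n]_{<j}$, is strong enough to imply connectedness of arbitrarily long intersections of faces. The resolution is the ordered-path decomposition of Lemma~\ref{orderedpaths}, which reduces connectedness in a long chain intersection to a finite sequence of pairwise WPIP applications between intermediate vertices $w_m$ that already sit inside the full intersection.
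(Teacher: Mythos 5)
Your proof is correct, but it takes a genuinely different route from the paper in both directions. For CIP $\Rightarrow$ WPIP the paper goes through heavy machinery: it invokes Theorem~\ref{CIP<->Poly} to conclude that $\m$ is polytopal and then applies Lemma~\ref{POL->WPIP} to get the full SPIP, of which the WPIP is a special case. You instead argue directly: the two hypothesised paths force $u$ and $v$ to share their $m$-faces for all $m\le i$ and all $m\ge j$, these faces form a chain (they all contain $u$), and the CIP makes their intersection --- a subgraph with edges only of colours in $[n]_{>i}\cap[n]_{<j}$ --- connected. This is more self-contained and does not presuppose the polytopality theorem. For WPIP $\Rightarrow$ CIP the paper inducts on the length $k$ of the chain, using the induction hypothesis to obtain a $u$--$v$ path avoiding $i_2,\dots,i_k$, decomposing it via Lemma~\ref{reallyorderedpaths}, and making a single WPIP application per inductive step. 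You avoid induction entirely: you extract one vertex $w_m$ per face $G_m$ by applying Lemma~\ref{orderedpaths} to that face alone, verify that each $w_m$ already lies in the full intersection $\bigcap_m G_m$, and then stitch consecutive $w_m$'s together with $k-1$ applications of the WPIP, each producing a path of colours strictly between $i_m$ and $i_{m+1}$ and hence disjoint from $\{i_1,\dots,i_k\}$. Your version needs only Lemma~\ref{orderedpaths} rather than Lemma~\ref{reallyorderedpaths}, and the absence of induction makes the construction of the connecting walk $u\to w_1\to\cdots\to w_k\to v$ fully explicit; the paper's inductive formulation, by contrast, keeps each step down to a single use of the WPIP. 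Both arguments are sound.
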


\begin{proof}
If $\m$ has the CIP, by Theorem~\ref{CIP<->Poly} and Lemma~\ref{POL->WPIP}, it has the SPIP, and hence is the WPIP.
The interesting part is the converse.

Let $\m$ be a maniplex that has the WPIP,
and let $\{G_1, G_2, \dots, G_k\}$ be a chain of the poset $\po_\M$. 
We need to show that $\bigcap_{j=1}^k G_{j}$ is connected. 
We proceed by induction over $k$.

For $k=1$ there is nothing to show, so let $k=2$. 
Then $G_j$ is a connected component of $\m_{\overline{i_j}}$, for some $i_1, i_2\in\n$.
Let $u,v \in G_1\cap G_2$. For each $j=1,2$, since $u,v\in G_j$, then there is a $u-v$ path with colours different from $i_j$.
By Lemma~\ref{orderedpaths} there exists $w_j$ such that there is a path $p_j$ from $u$ to $w_j$ having colours smaller than $i_j$ and a path $q_j$ from $w_j$ to $v$ having colours greater than $i_j$.
Without loss of generality assume that $i_1<i_2$.
Then, $p_1$ followed by $p_2$ is a path from $w_1$ to $w_2$ that goes through $u$ and has colours in $[n]_{<{i_2}}$. And $q_1$ followed by $q_2$ is a path from $w_1$ to $w_2$ that goes through $v$ and has colours in $\n_{>i_1}$.
Since $\m$ has the WPIP, there is a $w_1-w_2$ path $t$ with colours in $\{i_1+1, \dots, i_2-1\}$.
Note that $p_1$, $t$ and $q_2$ do not have edges of colours $i_1$ nor $i_2$.
Thus, the path that starts in $u$, follows first $p_1$, then $t$ and finally $q_2$ is a $u-v$ path that is contained in $G_1\cap G_2$. Therefore $G_1\cap G_2$ is connected.

Suppose now that $k\geq 3$ and that the intersection of the elements of any chain with less than $k$ colours is connected.
Let $u,v \in \bigcap_{j=1}^k G_{j}$. The idea is similar to the one for $k=2$.
Without loss of generality we assume that $i_1<i_2<\dots <i_k$.
As $u,v\in G_1$, there is a $u-v$ path with colours different than $i_1$.
By Lemma~\ref{orderedpaths}, there exists a vertex $w_1$ such that the $u-w_1$ path $p_1$ has colours smaller than $i_1$ and the $w_1-v$ path $p_2$ has colours greater than $i_1$.

Now, $u,v \in \bigcap_{j=2}^k G_{j}$, which, by induction hypothesis is connected. 
Hence, there is a $u-v$ path with colours in $\overline{\{i_2, i_3, \dots, i_k\}}$.
By Lemma~\ref{reallyorderedpaths}, there exist $w_2, w_3, \dots w_k$ such that there is a path $q_1$ from $u$ to $w_2$ having edges of colours smaller than $i_2$, for each $j=2,\dots, k-1$, there is a path $q_j$ from $w_j$ to $w_{j+1}$ having edges of colours greater than $i_j$ but smaller than $i_{j+1}$ and there is a path $q_k$ from $w_k$ to $v$ having colours all greater than $i_k$.

Note that $p_1$ followed by $q_1$ is a $w_1-w_2$ in which all the edges have colours smaller than $i_2$. On the other hand $p_2$ followed by $q_k, q_{k-1}, \dots, q_2$ is a $w_1-w_2$ path whose edges have colours greater than $i_1$.
As $\m$ has the WPIP, there is a $w_1-w_2$ path $t$ with colours greater then $i_1$ but smaller than $i_2$.
Similar as before, the paths $p_1$, $t$, $q_2, q_3, \dots, q_k$ do not have edges of colours $i_1, \dots, i_k$. This implies that there is a $u-v$ path with colours in $\overline{\{i_1,\dots, i_k\}}$ and hence $\bigcap_{j=1}^k G_{j}$ is connected. 
Thus, the theorem follows.
\end{proof}

\begin{coro}
\label{PIP}
A maniplex has the SPIP if and only if it has the WPIP.
\end{coro}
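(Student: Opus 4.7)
The plan is to observe that this corollary falls out almost for free from the machinery already assembled, essentially as a chain of implications. The one non-trivial direction is SPIP from WPIP, and we get it by routing through the CIP and polytopality.

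First I would dispatch the easy direction: SPIP implies WPIP, because the WPIP is literally the special case of the SPIP obtained by restricting the sets $A$ and $B$ to intervals of the form $[n]_{<j}$ and $[n]_{>i}$. One sentence suffices here.

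For the converse, suppose $\m$ has the WPIP. By the theorem just proved (WPIP $\Leftrightarrow$ CIP), $\m$ has the CIP. By Theorem~\ref{CIP<->Poly}, this means $\po_\m$ is a polytope and $\m$ is isomorphic to its flag graph $\G_{\po_\m}$. Now Lemma~\ref{POL->WPIP}, although named after the WPIP, actually establishes the full SPIP for flag graphs of polytopes: given two paths in $\G_\po$ between $v_\Phi$ and $v_\Psi$ using colours in arbitrary sets $A$ and $B$, the strong flag connectivity of $\po$ yields a path using only colours in $A\cap B$. Therefore $\m$, being isomorphic to the flag graph of a polytope, inherits the SPIP.

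The main ``obstacle'' is only a presentational one: I need to emphasize that Lemma~\ref{POL->WPIP} is really a statement about arbitrary colour sets $A$ and $B$, not just intervals, so that invoking it yields the SPIP and not merely the WPIP we started with. Once that is pointed out, the corollary is immediate, and no new combinatorial work is needed.
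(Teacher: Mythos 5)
Your proof is correct and follows exactly the paper's route: the trivial direction by specialization, and the converse via WPIP $\Rightarrow$ CIP $\Rightarrow$ polytopality $\Rightarrow$ SPIP, using Lemma~\ref{POL->WPIP} (which is indeed stated for arbitrary colour sets $A$ and $B$, so it gives the full SPIP). Your remark clarifying that point is a helpful addition but not a departure from the paper's argument.
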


\begin{proof}
Clearly the WPIP is a consequence of the SPIP, so  a maniplex has the SPIP, has the WPIP.
On the other hand, if a maniplex $\m$ has the WPIP, then it has the CIP. 
Hence, $\m$ is polytopal and therefore it has the SPIP.
\end{proof}

The above corollary motivates us to say that a maniplex has the {\em Path Intersection Property (PIP)} if it has either the SPIP or the WPIP (and therefore both). 
The obvious consequence of Corollary~\ref{PIP} is that whenever we need to show that a maniplex has the PIP, it is enough to show the weak version of it, whereas if the PIP is given, we can use it in its stronger version.
The following result is a straightforward consequence of the results of this section.

\begin{theorem}
A maniplex is polytopal if and only if it has the Path Intersection Property. 
\end{theorem}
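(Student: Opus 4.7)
The plan is to string together the equivalences already established in the excerpt rather than prove anything new. Concretely, the chain of biconditionals I would invoke is: polytopal $\Leftrightarrow$ CIP (by Theorem~\ref{CIP<->Poly}) $\Leftrightarrow$ WPIP (by the theorem immediately preceding Corollary~\ref{PIP}) $\Leftrightarrow$ SPIP (by Corollary~\ref{PIP}). Since the PIP is defined to mean ``has the SPIP or the WPIP'', and those two are equivalent, PIP is equivalent to either of them, and hence equivalent to being polytopal.

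So the proof would just be one or two sentences citing these results. First I would say: if $\m$ is polytopal, then by Theorem~\ref{CIP<->Poly} it has the CIP, and therefore by the WPIP $\Leftrightarrow$ CIP theorem (combined with Lemma~\ref{POL->WPIP}) it also has the SPIP, and in particular the PIP. Conversely, if $\m$ has the PIP then in particular it has the WPIP, which by the WPIP $\Leftrightarrow$ CIP theorem gives it the CIP, and then Theorem~\ref{CIP<->Poly} upgrades this to polytopality.

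There is no real obstacle at this stage: all the substantive work — the careful induction on chain length that turns a WPIP hypothesis into CIP via Lemmas~\ref{orderedpaths} and~\ref{reallyorderedpaths}, and the argument that CIP forces both the diamond condition (Lemma~\ref{CIP->POL}) and faithfulness of $\po_\m$ — has already been done. The only thing worth being slightly careful about is making sure the final statement is phrased using the umbrella term PIP rather than committing to either SPIP or WPIP, but since Corollary~\ref{PIP} identifies the two, this is purely cosmetic. I would therefore write the proof as a short paragraph that simply assembles the quoted results into the two directions of the biconditional.
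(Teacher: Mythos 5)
Your proposal is correct and matches the paper's own treatment: the paper offers no separate argument, stating only that the theorem is ``a straightforward consequence of the results of this section,'' which is precisely the chain polytopal $\Leftrightarrow$ CIP $\Leftrightarrow$ WPIP $\Leftrightarrow$ SPIP that you assemble. Nothing further is needed.
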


\section{On the mix or parallel product of maniplexes}

There are several operations on polytopes, that result on maniplexes that are not always polytopal. One of the most interesting ones is the mix or parallel product of maniplexes. 
The mix of maniplexes generalizes the mix of polytopes (see \cite{ARP,mixing}) and the parallel product of maps (see \cite{parallel}).
While this operation is often defined in terms of groups, it can be given in terms of the flag graphs of the polytopes in the following way.

Let $\m$ and $\cal N$ two $n$-maniplexes. 
Consider the product of the colour graphs $\m$ and $\cal N$ whose vertices are the vertices of $V(\m) \times V({\cal N})$ and where the vertices $(a,u), (b,v) \in V(\m) \times V({\cal N})$ are join by an $i$-edge whenever $a,b\in V(\m)$ and $u,v\in V({\cal N})$ are join by an $i$-edge.
In other words, for each $i\in [n]$,  $(b,v)^i=(a,u)$ if and only if $b^i=a$ and $v^i=u$.

The {\em mix or parallel product} $\m\diamondsuit {\cal N} $ of $\m$ and $\cal N$ is a connected component of the product described above.

It is not difficult to see that if a maniplex $\m$ is a cover of another maniplex $\cal N$ (as edge-coloured graphs), then the mix $\m\diamondsuit {\cal N}$ gives us again $\m$. 
In particular this implies that if $\m$ is a polytopal maniplex that covers a non-polytopal maniplex $\cal N$, then their mix is polytopal. 
In other words, when mixing maniplexes is not necessary to have both maniplexes polytopal in order to have the mix to be polytopal. 

\begin{figure}[htbp]
\begin{center}
\includegraphics[width=14cm]{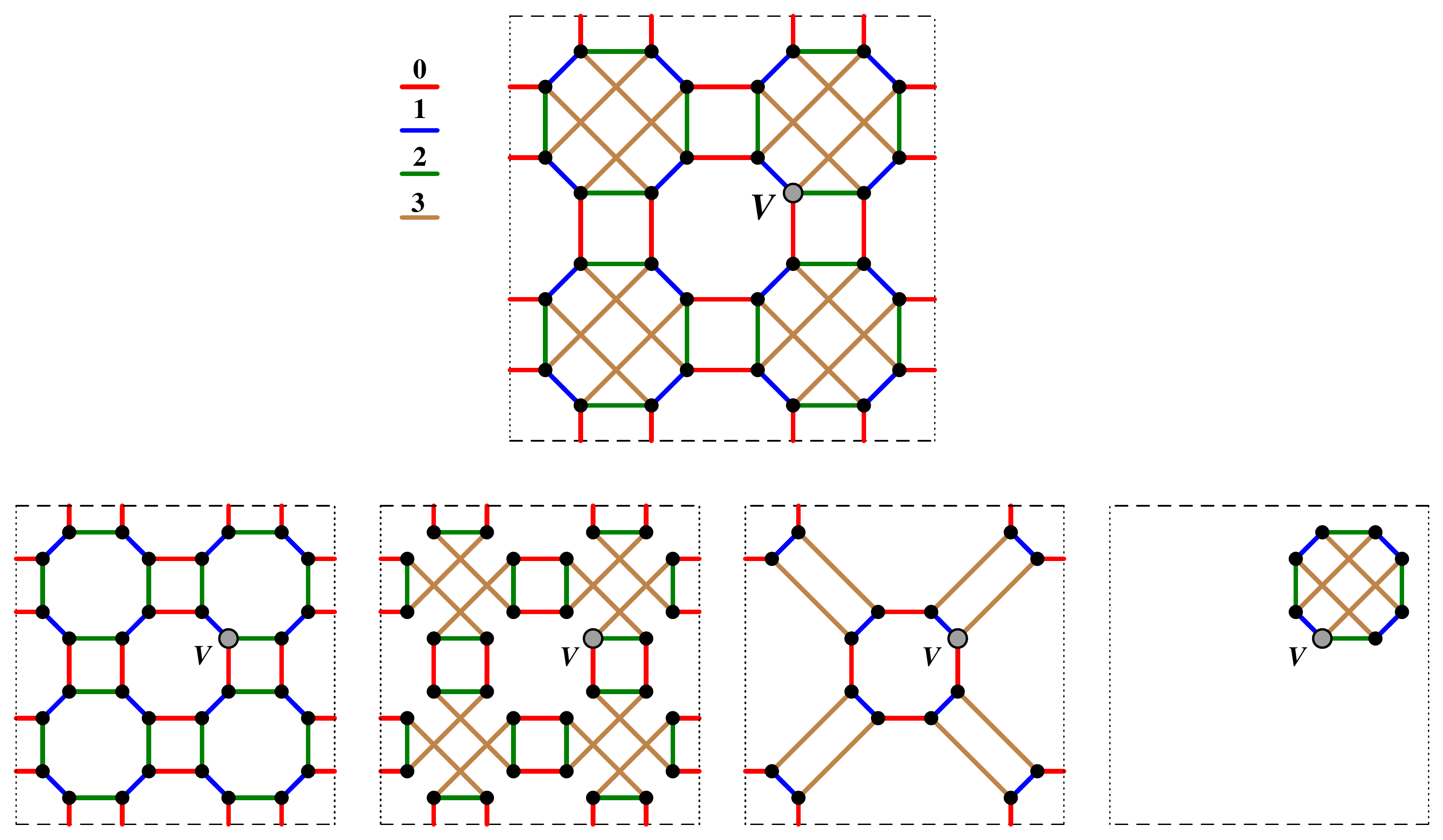}
\caption{The non-polytopal maniplex $\m$ and $i$-faces of $\m$, for each $i=0,1,2,3$.}
\label{ManiM}
\end{center}
\end{figure}

Moreover, the mix $\po=\m\diamondsuit {\cal N}$ of the non-polytopal maniplexes $\m$ and $\cal N$ given in Figures~\ref{ManiM} and~\ref{ManiN}, respectively, is polytopal. In figure~\ref{PolyMix} we present a part of $\po$, and emphasize the $i$-faces containing a given {\em base} flag. From the figures one can verify that when taking connected components that contain the base flag satisfy the CIP. Using the symmetries of $\m$ and $\cal N$, we then obtain the polytopality of $\po$. (The graphs of Figures~\ref{ManiM},~\ref{ManiN} and ~\ref{PolyMix} are drawn on a torus, so the dotted lines of the figures are identified.)

\begin{figure}[htbp]
\begin{center}
\includegraphics[width=6cm]{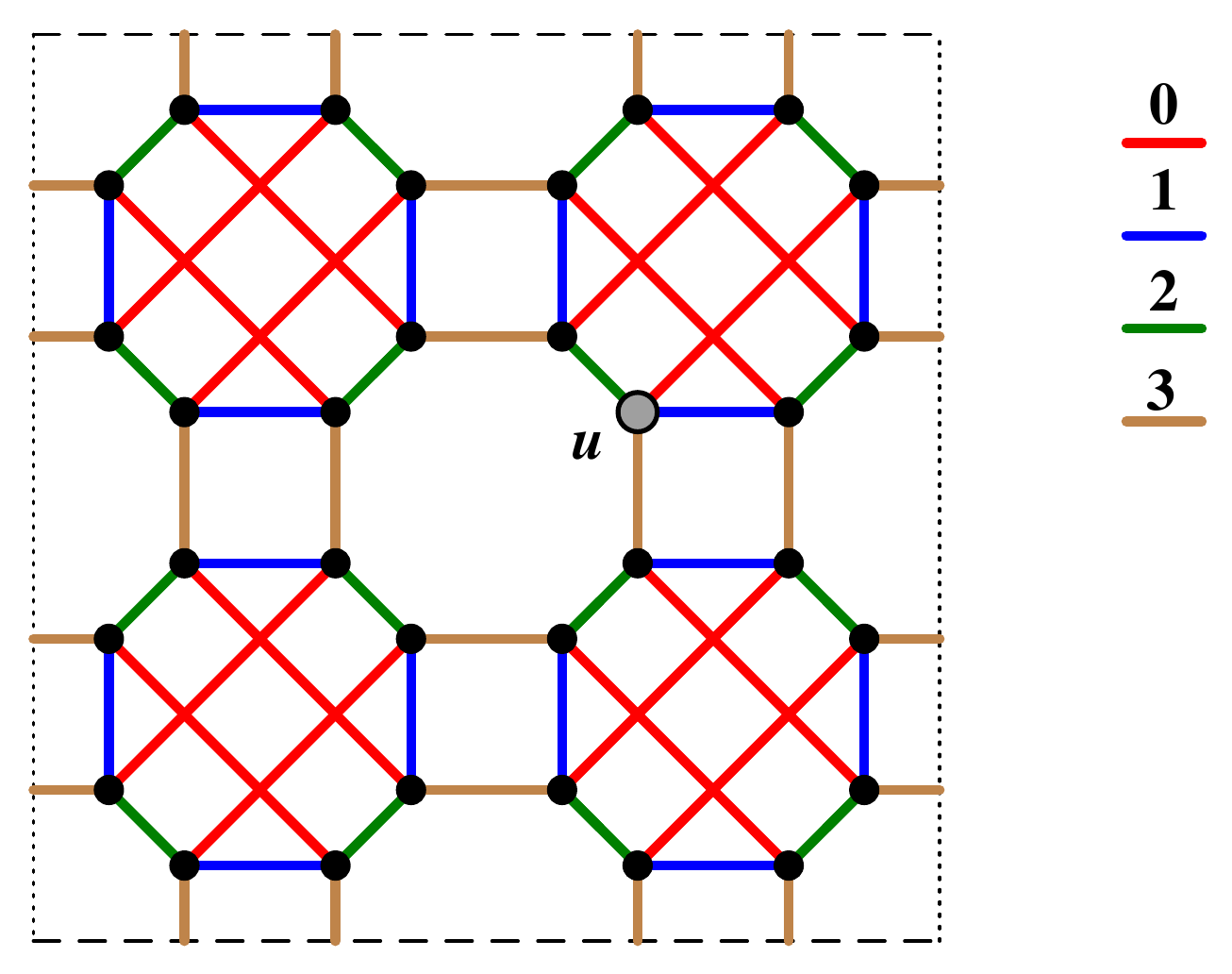}
\caption{The non-polytopal maniplex $\cal N$.}
\label{ManiN}
\end{center}
\end{figure}

\begin{figure}[htbp]
\begin{center}
\includegraphics[width=6cm]{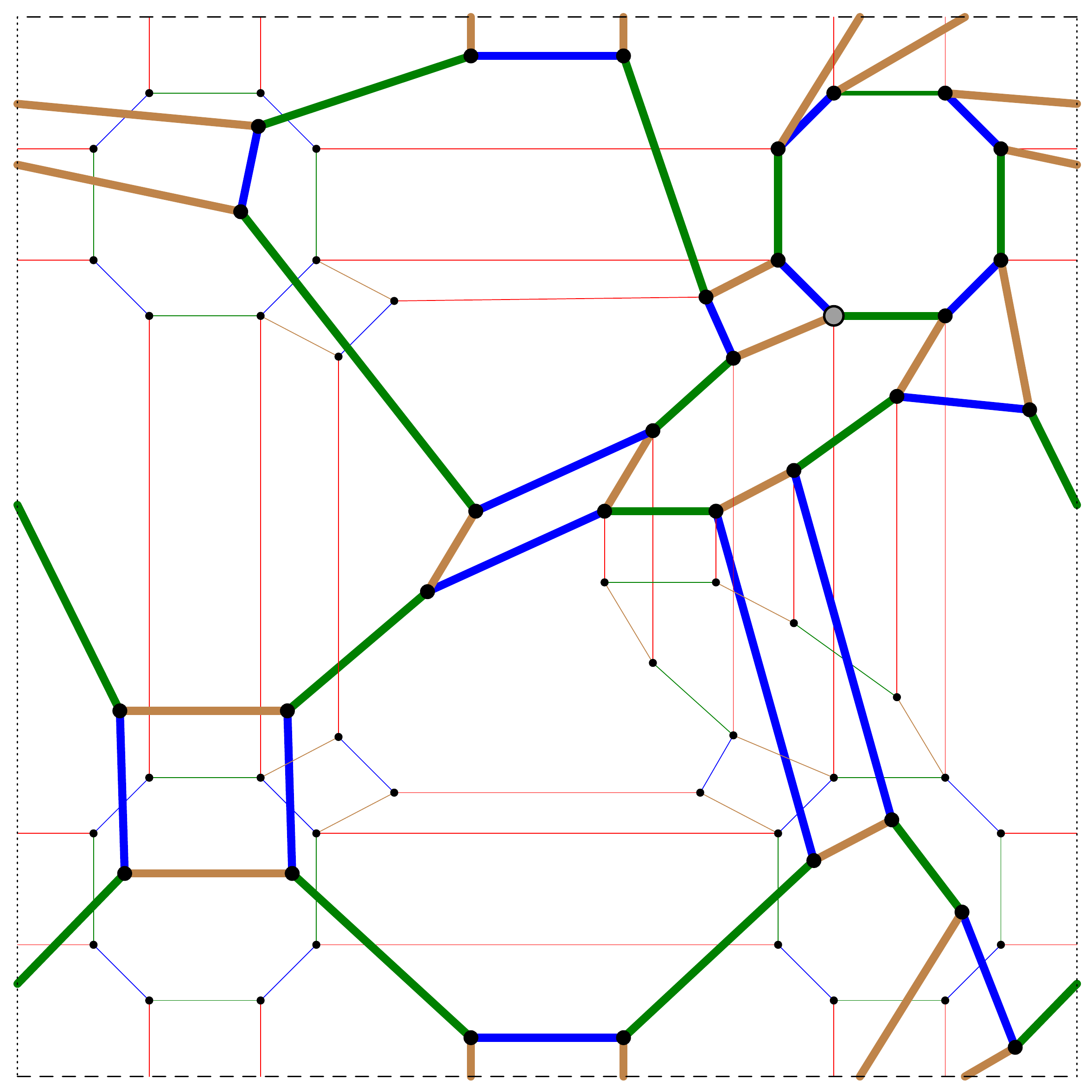} \ \ \ \ \ \ \ \ 
\includegraphics[width=6cm]{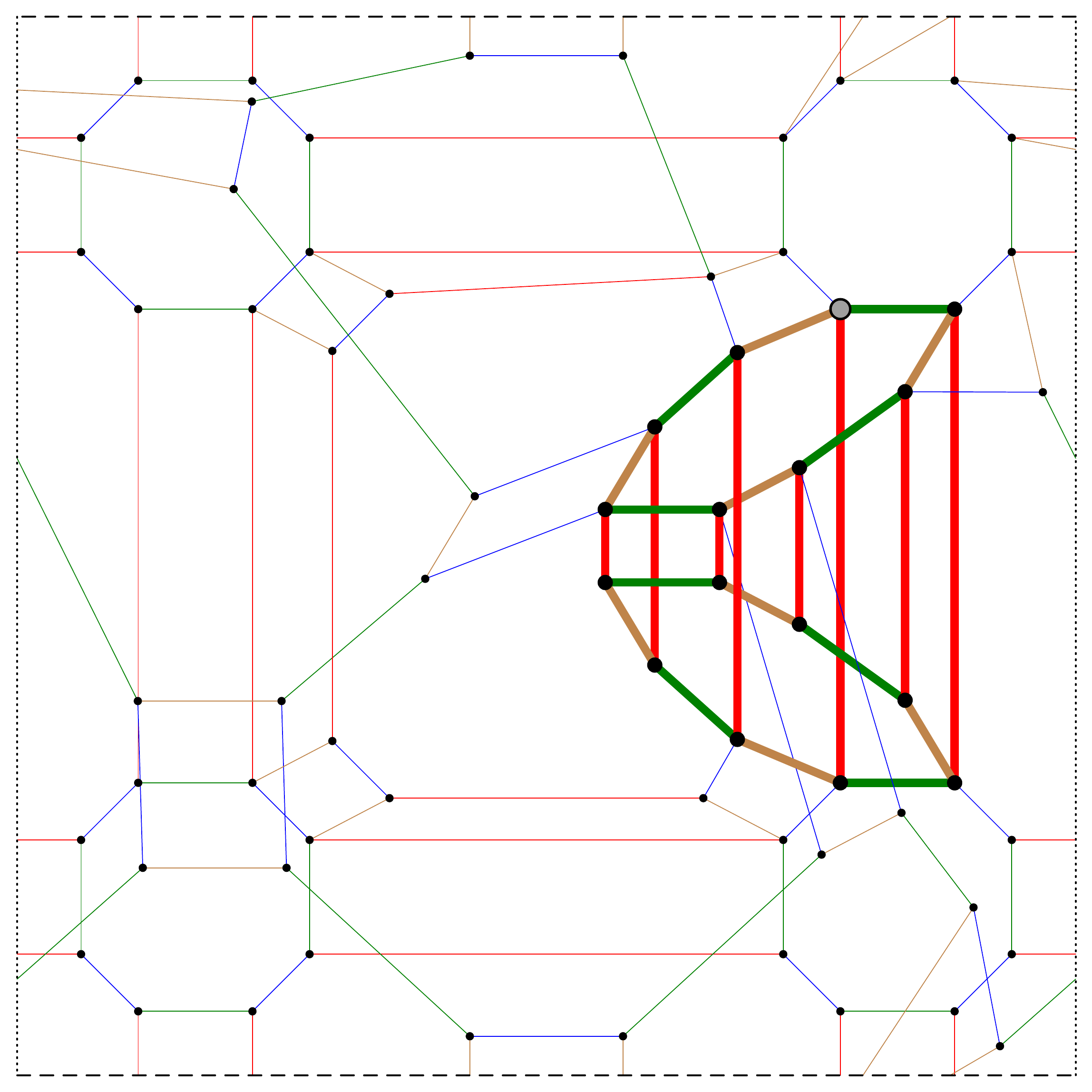} 
\includegraphics[width=6cm]{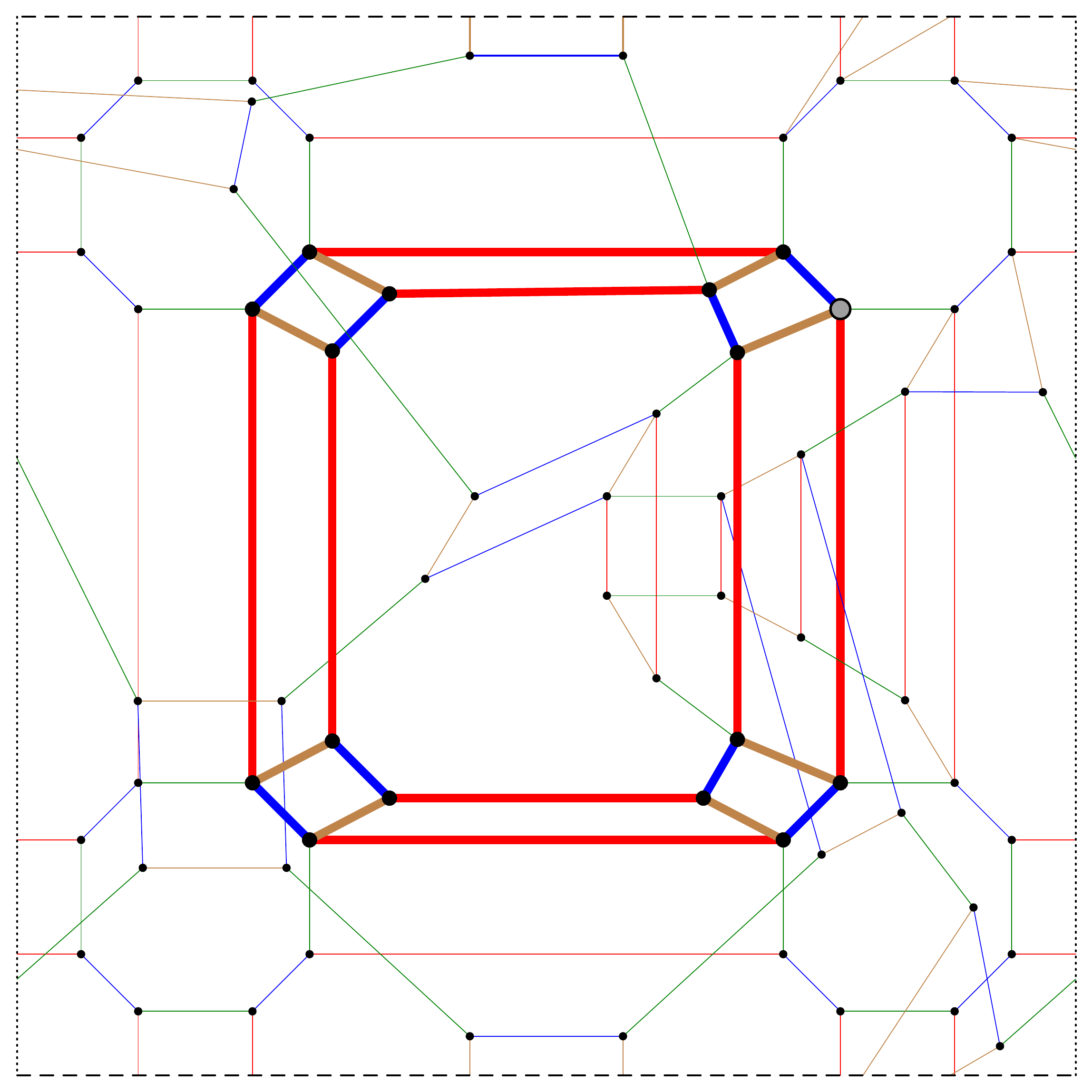} \ \ \ \ \ \ \ \ 
\includegraphics[width=6cm]{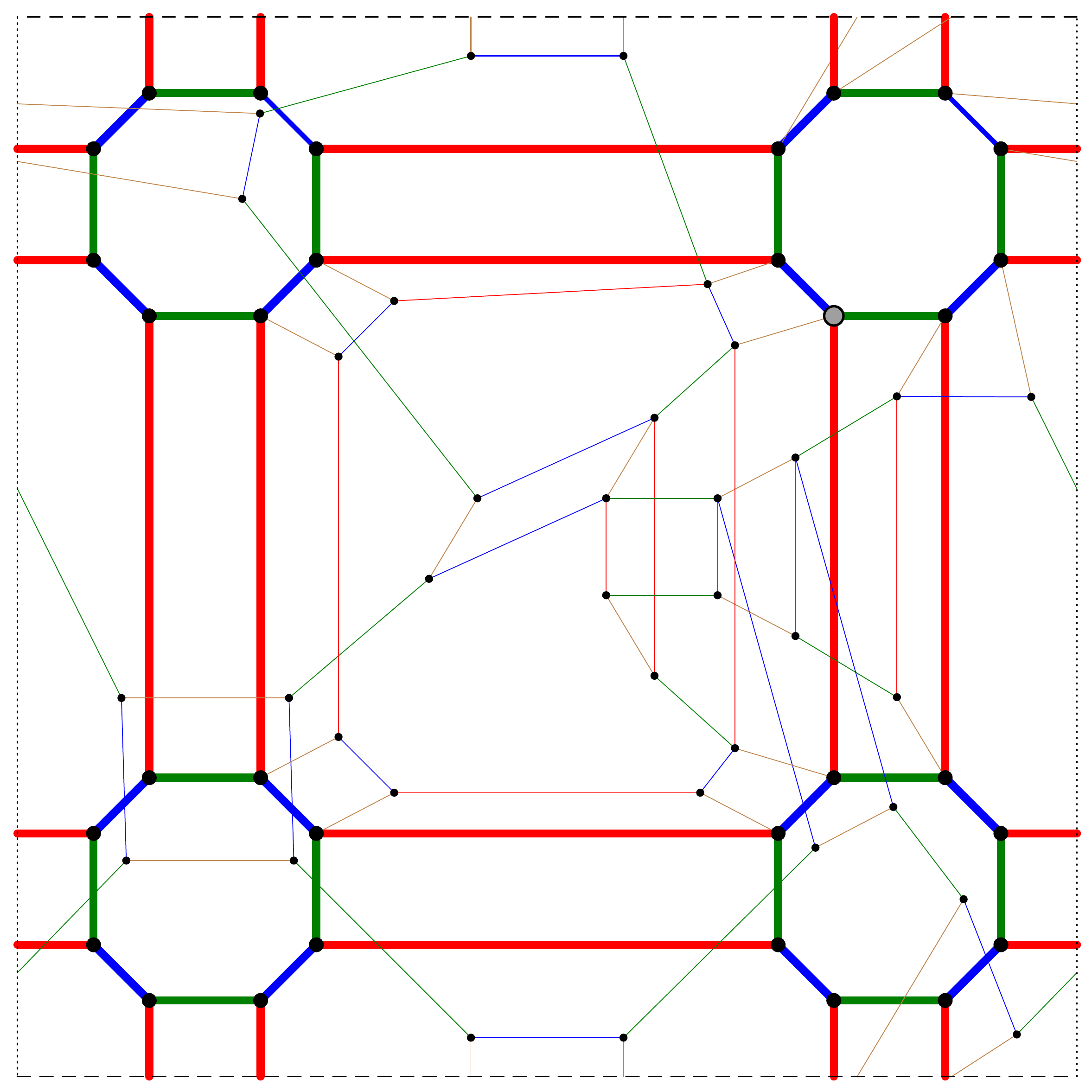}
\caption{For each $i=0,1,2,3$, an $i$-face of the polytopal maniplex $\po=\m\diamondsuit {\cal N}$. }
\label{PolyMix}
\end{center}
\end{figure}

In \cite{mixing}, the authors study instances of when the mix of two polytopes is again a polytope, and give a small example of two (regular) $4$-polytopes whose mix is not polytopal.  Their study is not in terms of maniplexes, but in terms of posets that satisfy the properties of an abstract polytope except for the strong flag connectivity. The general question of determining when a mix of two polytopes is polytopal remains open.

\begin{problem} Give necessary and sufficient conditions on two maniplexes (as coloured graphs) in order to have a polytopal mix.
\end{problem}

\section*{Acknowledgments}
We gratefully acknowledge financial support of the PAPIIT-DGAPA, under grant IN107015, and of CONACyT, under grant 166951.
The first author wishes to thank particularly Ian Gleason for his valuable inputs while this work was being developed, and Octavio Arizmendi and Ian Gleason for their important suggestions on the first draft of this paper.  
The completion of this work was done while the second author was on sabbatical at the Laboratoire d'Informatique de l'\'Ecole Polytechnique. She thanks LIX and Vicent Pilaud for their hospitality, as well as the program PASPA-DGAPA and the UNAM for the support for this sabbatical stay.

\end{document}